\def\thesection{\arabic{section}}
\def\theequation{\thesection.\arabic{equation}}
\newcommand{\e}{\epsilon}
\newcommand{\Om} {\Omega}
\newcommand{\De} {\Delta}
\newcommand{\la} {\lambda}
\newcommand{\noi} {\noindent}
\newcommand{\mb} {\mathbb}
\newcommand{\mc} {\mathcal}
\markboth{\small } {\small Magnetic Choquard equation}
\def\theequation{\@arabic{\c@section}.\@arabic{\c@equation}}
\newcommand{\QED}{\rule{2mm}{2mm}}
\newtheorem{Theorem}{Theorem}[section]
\newtheorem{Lemma}[Theorem]{Lemma}
\newtheorem{Proposition}[Theorem]{Proposition}
\newtheorem{Corollary}[Theorem]{Corollary}
\newtheorem{Remark}[Theorem]{Remark}
\newtheorem{Definition}[Theorem]{Definition}
\begin{document}

{\vspace{0.01in}}

\title
{ \sc On Concentration of least energy solutions for magnetic critical Choquard equations}

\author{%J. Giacomoni\footnote{LMAP (UMR CNRS 5142) Bat. IPRA,
 % Avenue de l'Universit\'e
   %F-64013 Pau, France. email:jacques.giacomoni@univ-pau.fr},
    ~~ T. Mukherjee\footnote{Department of Mathematics, Indian Institute of Technology Delhi,
Hauz Khaz, New Delhi-110016, India.
 e-mail: tulimukh@gmail.com}~ and ~K. Sreenadh\footnote{Department of Mathematics, Indian Institute of Technology Delhi,
Hauz Khaz, New Delhi-110016, India.
 e-mail: sreenadh@gmail.com} }

\date{}

\maketitle

\begin{abstract}

\noi In the present paper, we consider the following magnetic nonlinear Choquard equation
$$
\left\{
	\begin{array}{ll}
          & (-i \nabla+A(x))^2u + \mu g(x)u = \la u + (|x|^{-\alpha} * |u|^{2^*_\alpha})|u|^{2^*_\alpha-2}u \;\text{in} \; \mb R^n ,\\
 & u \in H^1(\mb R^n, \mb C)
	\end{array}
\right.$$
where $n \geq 4$, $2^*_\alpha= \frac{2n-\alpha}{n-2}$, { $\alpha \in (0,n)$}, ${\mu} >0$, ${{\la} >0 }$ is a parameter, $A(x): \mb R^n \rightarrow \mb R^n$ is a magnetic vector potential and $g(x)$ is a real valued potential function on $\mb R^n$. Using variational methods, we establish the existence of least energy solution under some suitable conditions. Moreover, the concentration behavior of solutions is also studied as $\mu \rightarrow +\infty$.

\medskip

\noi \textbf{Key words:}  Nonlinear Schr\"{o}dinger equations, Magnetic potential, Choquard equation, {Hardy-Littlewood-Sobolev critical} exponent.

\medskip

\noi \textit{2010 Mathematics Subject Classification:} 35R11, 35R09, 35A15.

\end{abstract}

\section{Introduction}
In this article, we study the existence and concentration behavior of nontrivial solutions of the following nonlinear Schr\"{o}dinger equation with nonlocal nonlinearity {of Choquard type}
$$(P_{\la,\mu})
\left\{
	\begin{array}{ll}
          (-i \nabla+A(x))^2u + \mu g(x)u = \la u + (|x|^{-\alpha} * |u|^{2^*_\alpha})|u|^{2^*_\alpha-2}u & \mbox{in } \mb R^n \\
          u \in H^1(\mb R^n, \mb C)
	\end{array}
\right.$$
where $n \geq 4$, $2^*_\alpha= \frac{2n-\alpha}{n-2}$, ${\alpha \in (0,n)}$, $\mu>0$, ${\la>0}$, $A = (A_1, A_2, \ldots, A_n): \mb R^n \rightarrow {\mb R^n}$ is a vector(or magnetic) potential such that {$A \in L^n_{\text{loc}}(\mb R^n, \mb R^n)$} and {$A$ is continuous at ${0}$} and $g(x)$ satisfies the following assumptions:
\begin{enumerate}
\item[(g1)] $g \in C(\mb R^n,\mb R)$, $g \geq 0$ and $\Om := \text{interior of}\;g^{-1}(0)$ is  a nonempty bounded set with smooth boundary and $\overline{\Om}= g^{-1}(0)$.
\item[(g2)] There exists $M>0$ such that ${\mc L}\{x \in \mb R^n:\; g(x)\leq M\} < {+\infty}$, where ${\mc L}$ denotes the Lebesgue measure in $\mb R^n$.
\end{enumerate}
{A} more general form of the above problem {is}
\begin{equation}\label{intro1}
(-i \nabla+A(x))^2w + G(x)w= F(x,w),\; w \in H^1(\mb R^n, \mb C)
\end{equation}
{which} arises when we try to look for standing wave solution of the Schr\"{o}dinger equation
\[i\hbar \frac{\partial \phi}{\partial t}= (-i \hbar \nabla+A(x))^2\phi + Q(x)\phi- n(x,|\phi|)\phi,\]
where $\hbar$ is the Plank constant. A lot of  attention has been paid to nonlinear Schr\"{o}dinger equation in recent years. When $A\equiv 0$( i.e. no magnetic potential) in \eqref{intro1}, many authors studied the problem {as in} \cite{AMS, BDP, CL}. The problem of the type
\begin{equation}\label{intro2}
-\Delta u + \mu a(x)u= \la u + |u|^{p-2}u,
\end{equation}
where $a \geq 0$ is potential well, with subcritical growth i.e. $p < 2^*= 2n/(n-2)$ has been investigated extensively in \cite{ABC,BW,BPW,Oh,Wang}. In {the} critical case $p=2^*$, Clapp and Ding {in} \cite{CD} established {the} existence and multiplicity of positive solutions {for \eqref{intro2}} using variational methods. For Schr\"{o}dinger equations with critical nonlinearity, one may also refer \cite{AF,CS,DL,TS_sing}. {In \cite{GM}, authors have studied the blow-up of radial solutions to a cubic nonlinear Schr\"{o}dinger equation with a radial defect, located on the sphere of radius $r_0$}. {We also suggest readers to refer \cite{AFY,BR,HKT} for further study.}

When the magnetic vector potential $A \not \equiv 0$, the Schr\"{o}dinger equation of the form
\begin{equation*}
(-i\hbar \nabla +A(x))^2u + V(x)u = |u|^{p-2}u \; \text{in}\; \mb R^n,
\end{equation*}
where $V$ is electric potential function, has been widely studied by many authors, {we} refer {\cite{CT,CS1,LPW,Sq1,Sq2}} {for this} and the references therein. Motivated by these results, very recently L\"{u} \cite{Lu} studied the problem
\begin{equation}\label{intro4}
(-i \nabla +A(x))^2u + (g_0(x)+\mu g(x))(x)u = (|x|^{-\alpha}* |u|^p)|u|^{p-2}u, \; u \in H^1(\mb R^n, \mb C),
\end{equation}
where $n\geq 3$, $\alpha \in (0,n)$, $p \in \left( \frac{2n-\alpha}{n}, \frac{2n-\alpha}{n-2}\right)$, $g_0$ and $g$ are real valued functions on $\mb R^n$ satisfying some {necessary} conditions and $\mu >0$.  He proved the existence of ground state solution {when} $\mu \geq \mu^*$, for some $\mu^*>0$ and concentration behavior of solutions as $\mu \rightarrow \infty$. The Hardy-Littlewood-Sobolev inequality (see  Theorem \ref{H-L_ineq}) plays an important role for studying such problems and in that context, we {call} $2^*_\alpha = \frac{2n-\alpha}{n-2}$ {as} the crtical exponent {in the sense of Hardy-Littlewood-Sobolev inequality}. When $A \equiv 0$, $g_0\equiv 0,\;g\equiv 1$ and $\mu =1$ in \eqref{intro4}, that is
\[-\Delta u + u = (|x|^{-\alpha}* |u|^p)|u|^{p-2}u, u \in H^1(\mb R^n)\]
the equations are generally called the Choquard equations which arise in various fields of physics, example quantum theory of large systems of nonrelativistic bosonic atoms and molecules. Choquard equations are another topic of attraction for researchers now a days which in turn rendered a huge literature in this area, {for instance refer} \cite{clsa,ghsc,TS}.  In \cite{LB}, Lieb  proved the existence and uniqueness of solution, up to translations, for the problem
\[ -\De u+ u = (|x| * |u|^2)|u|^2 \; \text{in}\; \mb R^n. \]
%where  $f(t)$ is critical growth nonlinearity such that $|tf(t)| \leq C||t|^2 + |t|^{\frac{2n-\mu}{n-2}}|$ for $t \in \mb R$, $\mu >0$, for some $C>0$ and $F(t) = \int_0^{z}f(z) \mathrm{d}z$.
 In \cite{ my3,my, my1}, Gao and Yang  showed existence and multiplicity results for Brezis-Nirenberg type  problem {for} the nonlinear  Choquard equation
\begin{equation*}
-\De u = \left( \int_{\Om}\frac{|u|^{2^*_{\alpha}}}{|x-y|^{\alpha}}\mathrm{d}y \right)|u|^{2^*_{\alpha}-2}u + \la g(u) \; \text{in}\;
\Om, \quad u = 0 \; \mbox{on}\; \partial\Om,
\end{equation*}
where $\Om$ is smooth bounded domain in $\mb R^n$, $n>2$, $\la>0$, $0<\alpha<n$ and $g(u)$ is a nonlinearity with certain {necessary} assumptions. Salazar {in} \cite{szr} showed existence of vortex type solutions for the stationary nonlinear magnetic Choquard equation
\begin{equation*}
(-i \nabla +A(x))^2u + W(x)u = (|x|^{-\alpha}* |u|^p)|u|^{p-2}u \; \text{in} \; \mb R^n,
\end{equation*}
where $n\geq 3$, $\alpha \in (0,n)$, $p \in \left[2, 2^*_\alpha \right)$, $A: \mb R^n \to \mb R^n$ is magnetic potential and $W: \mb R^n \to  \mb R$ is bounded electric potential (under some assumptions on decay of $A$ and $W$ at infinity). Cingloni, Sechi and Squassina showed existence of family of solutions for a Schr\"{o}dinger equation in the presence of electric and magnetic potential and Hartree-type nonlinearity in \cite{CSS}. Schr\"{o}dinger equations with magnetic field and  Choquard type nonlinearity has also been studied in \cite{MSS,MSS1}. In this context, we also cite \cite{agm,ags,YW} with no {attempt} to provide the full list {of references}.

 Now a very obvious question arises, what happens in the critical case i.e. when $p=2^*_\alpha$ in \eqref{intro4}? Here in this paper, we consider the problem $(P_{\la,\mu})$ which is motivated by \eqref{intro2} and \eqref{intro4}. The main difficulty for this problem is the presence of {critical nonlinearity} in the sense of Hardy-Littlewood-Sobolev inequality which is also nonlocal in nature. The critical exponent term being nonlocal adds on the difficulty to study the Palais-Smale level around a nontrivial critical point. We define $\nabla_A u := (-i\nabla +A(x))u$ and consider the minimization problem here by defining
\[S_A = {\inf_{u \in H^1_A(\mb R^n) \setminus \{0\}} \frac{\int_{\mb R^n}|\nabla_Au|^2~\mathrm{d}x}{\int_{\mb R^n}(|x|^{\alpha}* |u|^{2^*_\alpha})|u|^{2^*_\alpha}~\mathrm{d}x}}\]
and proved that it is attained under some necessary and sufficient conditions which is a new result. Also the other results proved here are {completely} new and there is no work concerning this problem {till now} to the best of our knowledge. Following the approach of \cite{CD}, we show that $(P_{\la,\mu})$ has a solution. Also we show that the problem
$$ (P_\la)
\left\{
	\begin{array}{ll}
        (-i\nabla +A(x))^2u = \la u + (|x|^\alpha*|u|^{2^*_\alpha})|u|^{2^*_\alpha-2}u & \mbox{in } \Om, \\
        u=0 & \mbox{on } \partial \Om
	\end{array}
\right.$$
for small $\la$ acts as a limit problem for $(P_{\la,\mu})$ as $\mu \rightarrow \infty$. We use the knowledge of $(P_\la)$ to show the concentration behavior of solutions of $(P_{\la,\mu})$.

\noi {We divide our paper into 4 sections. Section 2 contains the variational setting and the main results of our work. We study the Palais-Smale sequences and proved some compactness results in section 3. Making {use} of these results, we establish the proof of main theorems in section 4.}

\section{Variational setting and main results}
We assume that {$g$ satisfies the conditions} $(g1)$ and $(g2)$ throughout this paper. Let us define
$$ H^1_A(\mb R^n, \mb C)= \left\{u \in L^2(\mb R^n,\mb C) \;: \; \nabla_A u \in L^2(\mb R^n, \mb C^n)\right\}.$$
Then $H^1_A(\mb R^n, \mb C)$ is a Hilbert space with the inner product
$$\langle u,v\rangle_A = \text{Re} \left(\int_{\mb R^n} (\nabla_A u \overline{\nabla_A v} + u \overline v)~\mathrm{d}x \right),$$
where $\text{Re}(w)$ denotes the real part of $w \in \mb C$ and $\bar w$ denotes its complex conjugate. The associated norm $\|\cdot\|_A$ on the space $H^1_A(\mb R^n, \mb C)$ is given by
$$\|u\|_A= \left(\int_{\mb R^n}(|\nabla_A u|^2+|u|^2)~\mathrm{d}x\right)^{\frac{1}{2}}.$$
We call $H^1_A(\mb R^n, \mb C)$ simply $H^1_A(\mb R^n)$. Let $H^{0,1}_A(\Om, \mb C)$ (denoted by $H^{0,1}_A(\Om)$ for simplicity) be the Hilbert space defined by the closure of $C_c^{\infty}(\Om, \mb C)$ under the scalar product {$\langle u,v \rangle_A= \textstyle\text{Re}\left(\int_{\Om}(\nabla_A u \overline{\nabla_A v}+u \overline v)~\mathrm{d}x\right)$}, where $\Om = \text{interior of } g^{-1}(0)$. Thus {norm on $H^{0,1}_A(\Om)$ is given by}
 \[\|u\|_{H^{0,1}_A(\Om)}= \left(\int_\Om (|\nabla_A u|^2+|u|^2)~\mathrm{d}x\right)^{\frac{1}{2}}.\]
 Let $E = \left\{u \in H^1_A(\mb R^n): \int_{\mb R^n}g(x)|u|^2~\mathrm{d}x < +\infty\right\}$ be the Hilbert space equipped with the inner product
$$\langle u,v\rangle = \text{Re} \left(\int_{\mb R^n}\left(\nabla_A u \overline{\nabla_A v}~\mathrm{d}x + g(x)u\bar v \right)~\mathrm{d}x\right)$$
and the associated norm $\|\cdot\|_E$, where
$$\|u\|_E^2= \int_{\mb R^n}\left(|\nabla_A u|^2+ g(x)|u|^2\right)~\mathrm{d}x. $$
Then $\|\cdot\|_E$ is clearly equivalent to each of the norm $\|\cdot\|_\mu$, where
$$\|u\|_\mu^2= \int_{\mb R^n}\left(|\nabla_A u|^2+ \mu g(x)|u|^2\right)~\mathrm{d}x$$
for $\mu >0$. We have the following well known \textit{diamagnetic inequality} (for detailed proof, see \cite{LL}, Theorem $7.21$ ).

\begin{Theorem}\label{dia_eq}
If $u \in H^1_A(\mb R^n)$, then $|u| \in H^1(\mb R^n,\mb R)$ and
$$|\nabla |u|(x)| \leq |\nabla u(x)+ i A(x)u(x)| \; \text{for a.e.}\; x \in \mb R^n.$$
\end{Theorem}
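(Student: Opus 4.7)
The plan is to prove the diamagnetic inequality by regularizing $|u|$ so as to avoid the singularity at the zero set of $u$, carrying out the chain-rule identity in the smooth setting, and then passing to the limit. The central algebraic observation is that, since $A(x)$ is real-valued,
\begin{equation*}
\mathrm{Re}\bigl(\bar u (\nabla u + i A u)\bigr) = \mathrm{Re}(\bar u \nabla u) + \mathrm{Re}(i A |u|^2) = \mathrm{Re}(\bar u \nabla u),
\end{equation*}
because $iA|u|^2$ is purely imaginary. This identity is what converts the ordinary Sobolev computation for $|u|$ into one involving the magnetic gradient $\nabla u + i A u$.

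To make the argument rigorous, I would first fix $\varepsilon>0$ and set $v_\varepsilon := \sqrt{|u|^2+\varepsilon^2}-\varepsilon$, which is a Lipschitz composition with a bounded, smooth function of $u$. In the sense of distributions (using that $u,\nabla u + iAu \in L^2$ and that $Au\in L^1_{\text{loc}}$ thanks to $A\in L^n_{\text{loc}}$ together with $u\in L^{2^*}_{\text{loc}}$ via the local Sobolev embedding applied to $|u|$ after a bootstrap), one obtains
\begin{equation*}
\nabla v_\varepsilon = \frac{\mathrm{Re}(\bar u \nabla u)}{\sqrt{|u|^2+\varepsilon^2}} = \frac{\mathrm{Re}\bigl(\bar u (\nabla u + iAu)\bigr)}{\sqrt{|u|^2+\varepsilon^2}}.
\end{equation*}
By Cauchy--Schwarz applied pointwise,
\begin{equation*}
|\nabla v_\varepsilon(x)| \leq \frac{|u(x)|}{\sqrt{|u(x)|^2+\varepsilon^2}}\,|\nabla u(x) + iA(x)u(x)| \leq |\nabla u(x) + iA(x)u(x)|,
\end{equation*}
so $\nabla v_\varepsilon$ is uniformly bounded in $L^2(\mathbb R^n)$ by $\|\nabla_A u\|_{L^2}$.

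Next I would pass to the limit $\varepsilon \to 0^+$. Since $v_\varepsilon \to |u|$ in $L^2_{\text{loc}}$ and $\nabla v_\varepsilon$ is bounded in $L^2$, a subsequence converges weakly in $L^2$, which identifies $|u|$ as an element of $H^1(\mathbb R^n,\mathbb R)$ with $\nabla v_\varepsilon \rightharpoonup \nabla |u|$. Weak lower semicontinuity of the $L^2$-norm, combined with the pointwise bound above, then yields $|\nabla |u||\le |\nabla u + iAu|$ a.e. Alternatively, after noting that $\nabla v_\varepsilon \to \nabla |u|$ pointwise a.e.\ on $\{u\neq 0\}$, the inequality follows on that set, and on $\{u=0\}$ one has $\nabla |u| = 0$ a.e.\ (a standard fact for Sobolev functions), which trivially satisfies the bound.

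The main technical obstacle is justifying the chain-rule computation for $v_\varepsilon$ when $u$ is only known to lie in $H^1_A(\mathbb R^n)$ rather than in $H^1(\mathbb R^n,\mathbb C)$: the distributional gradient $\nabla u$ is a priori not in $L^2$, only the combination $-i\nabla u + Au$ is. I would handle this by observing that the identity for $\nabla v_\varepsilon$ can be rewritten purely in terms of $\nabla_A u$ and $u$ via the key algebraic observation, so no isolated use of $\nabla u$ is needed; alternatively one may test against $C_c^\infty$ functions and use $A\in L^n_{\text{loc}}$ with Hölder's inequality to make sense of each term. Once this is settled, the inequality follows directly, and membership $|u|\in H^1(\mathbb R^n,\mathbb R)$ is immediate from the uniform $L^2$ bound on $\nabla v_\varepsilon$.
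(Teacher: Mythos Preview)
Your proposal is correct and rests on the same algebraic identity as the paper's proof, namely that $\mathrm{Re}(\bar u\,\nabla u)=\mathrm{Re}\bigl(\bar u\,(\nabla u+iAu)\bigr)$ because $A$ is real-valued. The paper merely records this one-line formal computation (citing Lieb--Loss, Theorem~7.21, for details), whereas you supply the standard $\varepsilon$-regularization $v_\varepsilon=\sqrt{|u|^2+\varepsilon^2}-\varepsilon$ and the weak-limit passage that make the argument rigorous; so your route is the same in spirit but more complete.
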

{\begin{proof}
{The outline of the proof is as follows: since $A:\mb R^n \to \mb R^n$ we get}
\[\left| \nabla |u|(x)\right| = \left|\text{Re}\left(\nabla u \frac{\bar u}{|u|}\right)\right|= \left|\text{Re}\left( (\nabla u+ i Au)\frac{\bar u}{|u|}\right)\right|\leq |\nabla u+ i Au| .\]
\hfill{\QED}
\end{proof}}

\noi  So for each $q \in [2,2^*]$, there exists constant $b_q>0$ (independent of $\mu$) such that
\begin{equation}\label{mg-eq1}
|u|_q \leq b_q\|u\|_\mu, \; \text{for any}\; u \in E,
\end{equation}
where $|\cdot|_q$ denotes the norm in $L^q(\mb R^n,\mb C)$ and $2^*= \textstyle\frac{2n}{n-2}$ is the Sobolev critical exponent. {Also $H^1_A(\Om) \hookrightarrow L^q(\Om, \mb C)$ is continuous for each $1\leq q \leq 2^*$ and compact when $1\leq q < 2^*$.} Let us denote
\begin{equation}\label{B(u)}
B(u)= \int_{\mb R^n}(|x|^{\alpha}*|u|^{2^*_\alpha})|u|^{2^*_\alpha}~\mathrm{d}x = \int_{\mb R^n}\int_{\mb R^n}\frac{|u(x)|^{2^*_\alpha}|u(y)|^{2^*_\alpha}}{|x-y|^\alpha}~\mathrm{d}x\mathrm{d}y.
\end{equation}
To estimate the nonlocal term $B(u)$, we have the following Hardy-Littlewood-Sobolev inequality (refer \cite{LL}, Theorem 4.3).
 \begin{Proposition}\label{H-L_ineq}
 Let $t,r>1$ and $0<\alpha<n $ with $1/t+ {\alpha/n} +1/r=2$, $f \in L^t(\mathbb R^n)$ and $h \in L^r(\mathbb R^n)$. There exists a sharp constant $C(t,n,\alpha,r)$, independent of $f,h$ such that
 \begin{equation}\label{HLSineq}
 \int_{\mb R^n}\int_{\mb R^n} \frac{f(x)h(y)}{|x-y|^{\alpha}}\mathrm{d}x\mathrm{d}y \leq C(t,n,\alpha,r)|f|_t|h|_r.
 \end{equation}
 { If $t =r = \textstyle\frac{2n}{2n-\alpha}$ then
 \[C(t,n,\alpha,r)= C(n,\alpha)= \pi^{\frac{\alpha}{2}} \frac{\Gamma\left(\frac{n}{2}-\frac{\alpha}{2}\right)}{\Gamma\left(n-\frac{\alpha}{2}\right)} \left\{ \frac{\Gamma\left(\frac{n}{2}\right)}{\Gamma(n)} \right\}^{-1+\frac{\alpha}{n}}.  \]
 In this case there is equality in \eqref{HLSineq} if and only if $f\equiv (constant)h$ and
 \[h(x)= {z}(\gamma^2+ |x-a|^2)^{\frac{-(2n-\alpha)}{2}}\]
 for some ${z} \in \mathbb C$, $0 \neq \gamma \in \mathbb R$ and $a \in \mathbb R^n$.}
 \end{Proposition}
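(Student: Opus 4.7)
The plan is to establish the Hardy-Littlewood-Sobolev inequality by reducing it via duality to a mapping property of the Riesz potential $I_{n-\alpha}f(x) := \int_{\mb R^n} f(y)|x-y|^{-\alpha}\,\mathrm{d}y$, then proving the bound $|I_{n-\alpha}f|_{r'} \leq C|f|_t$ (with $\frac{1}{r'} = \frac{1}{t} - \frac{n-\alpha}{n}$) by real-variable methods. The sharp constant and classification of extremals in the conformal case $t = r = \frac{2n}{2n-\alpha}$ require a separate, substantially more delicate argument based on conformal invariance and rearrangement.

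First I would pass from $f$ and $h$ to their symmetric decreasing rearrangements $f^*$ and $h^*$; by Riesz's rearrangement inequality the double integral does not decrease while the $L^t$ and $L^r$ norms are preserved, so it suffices to treat nonnegative radial decreasing functions. Next, I would prove a weak-type $(t,r')$ estimate for $I_{n-\alpha}$ by splitting the kernel as $|x-y|^{-\alpha} = K_R(x-y) + K^R(x-y)$, where $K_R$ is the truncation to $|x-y|\leq R$. One uses $K_R \in L^1$ to control the near part in $L^t$, and $K^R \in L^{t'}$ (which requires $\alpha t' > n$, equivalent to $1/t > (n-\alpha)/n$, precisely the range forced by the scaling condition $1/t + \alpha/n + 1/r = 2$) to control the far part in $L^\infty$ via H\"older's inequality; optimizing $R$ yields
\[
\mc L\{x \in \mb R^n : I_{n-\alpha}f(x) > \la\} \leq C \left(\frac{|f|_t}{\la}\right)^{r'}.
\]
Strong-type $(t,r')$ boundedness then follows from the Marcinkiewicz interpolation theorem applied between two such weak bounds obtained by varying $t$, and the original inequality \eqref{HLSineq} is recovered by H\"older duality with $h$.

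For the sharp constant and the characterization of extremals in the conformal case, I would follow Lieb's approach: stereographic projection transfers the problem to the sphere $S^n$, where the inequality becomes conformally invariant; symmetric decreasing rearrangement on $S^n$ together with the compactness furnished by conformal invariance yields a maximizer which, by the transitive action of the conformal group, must be constant on $S^n$. Pulling back under the inverse projection produces precisely the announced family $h(x) = {z}(\gamma^2 + |x-a|^2)^{-(2n-\alpha)/2}$. The main obstacle will be establishing \emph{uniqueness} of extremals up to the natural symmetries (translations, dilations, scalar multiples): this demands a careful rigidity analysis of the equality case in Riesz's rearrangement inequality together with strict monotonicity of rearrangement under the conformal action, and is considerably harder than the non-sharp inequality, which by contrast reduces to the routine interpolation argument above.
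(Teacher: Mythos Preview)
Your proposal sketches a genuine proof of the Hardy--Littlewood--Sobolev inequality, but the paper itself does not prove this proposition at all: it is quoted verbatim as a classical result with a reference to Lieb--Loss, \emph{Analysis}, Theorem~4.3. So in terms of comparison, the paper's ``proof'' is a citation, whereas you have outlined the substance of the argument.

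That said, your outline is essentially the standard route and matches what one finds in the cited source. The non-sharp inequality via kernel truncation, weak-type bounds, and Marcinkiewicz interpolation is the textbook real-variable argument. For the conformal case $t=r=\frac{2n}{2n-\alpha}$, your plan (rearrangement, existence of a maximizer, stereographic lift to $S^n$, transitivity of the conformal group forcing the maximizer to be constant on the sphere) is precisely Lieb's original strategy, and the pullback indeed yields the stated family $z(\gamma^2+|x-a|^2)^{-(2n-\alpha)/2}$. Your caveat about uniqueness is well placed: the equality case in Riesz rearrangement (Carlen--Loss or Lieb's refinement) is where the real work lies. One minor technical point: existence of a maximizer for the sharp problem is not automatic from rearrangement alone, since the functional is translation- and dilation-invariant; one needs a compactness mechanism (e.g.\ concentration-compactness or the competing-symmetries argument) to prevent minimizing sequences from sliding off, and you gesture at this with ``compactness furnished by conformal invariance'' but it deserves explicit mention.
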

Proposition \ref{H-L_ineq} implies that
\begin{equation}\label{h-l1}
{|B(u)| \leq C(n,\alpha)|u|_{2^*}^{22^*_\alpha},}
\end{equation}
where ${C(n,\alpha)}$ is as given in Proposition \ref{H-L_ineq}. By \eqref{mg-eq1}, we say that $B(u)$ is well defined for $u \in E$. Also $B(u)\in C^1(E,\mb R)$, refer Lemma $2.5$ of \cite{YW}.
\begin{Definition}
We say that a function $u \in {E}$ is a weak solution of $(P_{\la,\mu})$ if
\[\text{Re}\left( \int_{\mb R^n} \nabla_A u \overline{\nabla_A v}~\mathrm{d}x + \int_{\mb R^n}(\mu g(x)-\la )u\overline{v}~\mathrm{d}x- \int_{\mb R^n} (|x|^{-\alpha}* |u|^{2^*_\alpha})|u|^{2^*_\alpha-2}u \overline{v}~\mathrm{d}x \right)=0\]
for all $v \in {E}$.
\end{Definition}
\begin{Definition}
A solution $u$ {of $(P_{\la,\mu})$} is said to be a least energy solution if the energy functional
\[I_{\la,\mu}(u)=  \int_{\mb R^n}\left( \frac12\left(|\nabla_A u|^2+ (\mu g(x)-\la)|u|^2\right) - \frac{1}{22^*_\alpha}(|x|^{-\alpha}* |u|^{2^*_\alpha})|u|^{2^*_\alpha}\right) ~\mathrm{d}x\]
achieves its minimum at $u$ over all the nontrivial solutions of $(P_{\la,\mu})$.
\end{Definition}
\begin{Definition}
A sequence of solutions $\{u_k\}$ of $(P_{\la,\mu_k})$ is said to concentrate at a solution $u$ of $(P_\la)$ if a subsequence converges strongly to $u$ in $H^1_A(\mb R^n)$ as $\mu_k \rightarrow \infty$.
\end{Definition}
The main idea to prove the existence of solution for the problem $(P_{\la,\mu})$ is using variational methods where the weak solutions for $(P_{\la,\mu})$ are obtained by finding the critical points of the energy functional $I_{\la,\mu}: H^1_A(\mb R^n) \rightarrow \mb R$ defined by
\[I_{\la,\mu}(u)=  \int_{\mb R^n}\left( \frac12\left(|\nabla_A u|^2+ (\mu g(x)-\la)|u|^2\right) - \frac{1}{22^*_\alpha}(|x|^{-\alpha}* |u|^{2^*_\alpha})|u|^{2^*_\alpha}\right) ~\mathrm{d}x.\]
Then $I_{\la,\mu} \in C^1(E, \mb R)$ with
\[\langle I^{\prime}_{\la,\mu}(u),v\rangle = \text{Re}\left( \int_{\mb R^n} \nabla_A u \overline{\nabla_A v}~\mathrm{d}x + \int_{\mb R^n}\left(\mu g(x)-\la -  (|x|^{-\alpha}* |u|^{2^*_\alpha})|u|^{2^*_\alpha-2}\right)u \overline{v}~\mathrm{d}x \right)\]
for $u,v \in E$. Thus we characterize the weak solutions of $(P_{\la,\mu})$ as the critical points of $I_{\la,\mu}$. From now onwards, we denote ${\la_1(\Om)}>0$ as the best constant of the compact embedding ${H^{0,1}_A(\Om)}\hookrightarrow L^2(\Om, \mb C) $ given by
\[ \la_1(\Om) = \inf\limits_{u \in H^{0,1}_A(\Om)}\left\{\int_{\Om}|\nabla_A u|^2 ~\mathrm{d}x : \; {\int_{\Om}|u|^2~\mathrm{d}x}=1\right\}\]
which is also the first eigenvalue of $-\Delta_A := (-i\nabla +A)^2$ on $\Om$ with boundary condition $u=0$. Let $S$ denote the best Sobolev constant of the embedding $H^1_0(\Om,\mb R) \hookrightarrow L^{2^*}(\Om,\mb R) $ which is given by
\[S = \inf\limits_{u \in H^1_0(\Om,\mb R)}\left\{\int_{\Om}|\nabla u|^2 ~\mathrm{d}x:\int_{\Om}|u|^{2^*}~\mathrm{d}x=1 \right\}. \]
We know that $S$ is independent of $\Om$ and it is achieved if and only if $\Om = \mb R^n$. We use $S_{H,L}$ to denote the best constant {as}
\begin{equation*}
S_{H,L}= \inf\limits_{u \in H^1(\mb R^n,\mb R)} \left\{\int_{\Om}|\nabla u|^2 ~\mathrm{d}x: B(u)=1\right\}.
\end{equation*}
By Lemma $1.2$ of \cite{my}, we get  that $S_{H,L}$ is achieved by {functions of the form}
\[U(x)= C\left(\frac{b}{b^2+ |x-a|^2}\right)^{\frac{n-2}{2}}\]
where {$C>0$ is a fixed constant, $a \in \mb R^n$ and $b>0$ are parameters}. {Now we state our main results :}

\begin{Theorem}\label{MT1}
For every $\la \in (0, \la_1(\Om))$ there exists a $\mu(\la)>0$ such that $(P_{\la,\mu})$ has a least energy solution $u_\mu$ for each $\mu\geq \mu(\la)$.
\end{Theorem}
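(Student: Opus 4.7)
My plan is to produce $u_\mu$ as a mountain pass critical point of $I_{\la,\mu}$ on $E$. I would first verify that $I_{\la,\mu}$ has mountain pass geometry uniformly in $\mu$. For the local minimum part, split $u = u\chi_\Om + u\chi_{\mb R^n\setminus\Om}$: on $\Om$ the eigenvalue characterization of $\la_1(\Om)$ gives $\int_\Om(|\nabla_A u|^2 - \la|u|^2)~\mathrm{d}x \geq (1 - \la/\la_1(\Om))\int_\Om |\nabla_A u|^2~\mathrm{d}x > 0$ since $\la < \la_1(\Om)$; on $\mb R^n\setminus\Om$, hypothesis $(g2)$ together with the diamagnetic inequality (Theorem \ref{dia_eq}) allows one to absorb $\la\int_{\mb R^n\setminus\Om}|u|^2$ into $\|u\|_\mu^2$ for $\mu$ large (split the set into $\{g > M\}$, on which $\la |u|^2 \leq (\la/(\mu M))\mu g|u|^2$, and $\{0 < g \leq M\}$, which has finite measure and on which one uses H\"older together with \eqref{mg-eq1}). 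Combined with \eqref{h-l1} and \eqref{mg-eq1}, this gives $I_{\la,\mu}(u) \geq \ba > 0$ on the sphere $\|u\|_\mu = \rho$ for $\rho$ small. For the descent direction I would take any $v \in C_c^\infty(\Om, \mb C)$ with $B(v) > 0$: since $\text{supp}(v) \subset \Om$ and $g \equiv 0$ there, $I_{\la,\mu}(tv)$ is $\mu$-independent and tends to $-\infty$ as $t \to \infty$. This defines a mountain pass level $c_{\la,\mu} \in (\ba, \infty)$ bounded above uniformly in $\mu$.

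The crux is to prove the strict estimate
\[c_{\la,\mu} \;<\; c^* \;:=\; \frac{n+2-\alpha}{2(2n-\alpha)}\, S_{H,L}^{\frac{2n-\alpha}{n+2-\alpha}},\]
below which Palais--Smale sequences are expected to be relatively compact (this is to be established in Section 3). I would test $I_{\la,\mu}$ along $\phi_\e(x) := \eta(x)\, e^{-i A(x_0)\cdot(x-x_0)}\, U_{\e,x_0}(x)$, where $U_{\e,x_0}$ is the HLS extremizer $C(\e/(\e^2 + |x-x_0|^2))^{(n-2)/2}$ concentrated at a point $x_0 \in \Om$ where $A$ is continuous, and $\eta$ is a smooth cut-off supported in $\Om$. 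The phase factor (Kato's trick, cf.\ \cite{Sq1,Sq2}) ensures that $|\nabla_A \phi_\e|^2$ matches $|\nabla |\phi_\e||^2$ up to errors vanishing with $\e$, and since $\text{supp}(\phi_\e) \subset \Om$ the penalization $\mu g$ contributes nothing. The standard Brezis--Nirenberg-type expansion, adapted to the nonlocal setting following Gao--Yang \cite{my}, then shows that the negative correction from $\la |\phi_\e|^2$ (of order $\e^2$ when $n \geq 4$) pushes $\sup_{t \geq 0} I_{\la,\mu}(t\phi_\e)$ strictly below $c^*$, uniformly in $\mu$.

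With the geometry and level bound in place, I would invoke the Palais--Smale compactness result of Section 3: there exists $\mu(\la) > 0$ such that for every $\mu \geq \mu(\la)$, any PS sequence of $I_{\la,\mu}$ at a level $c \in (0, c^*)$ admits a strongly convergent subsequence in $E$. Applying this to a PS sequence at level $c_{\la,\mu}$ yields a nontrivial critical point $u_\mu$. To conclude that $u_\mu$ is a least energy solution, I would argue via the Nehari manifold $\mathcal N_{\la,\mu} = \{u \in E \setminus \{0\} : \langle I'_{\la,\mu}(u), u\rangle = 0\}$: every nontrivial critical point lies on $\mathcal N_{\la,\mu}$, the fibre map $t \mapsto I_{\la,\mu}(tu)$ has a unique positive maximum for admissible $u$, and a standard argument gives $c_{\la,\mu} = \inf_{\mathcal N_{\la,\mu}} I_{\la,\mu}$. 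Hence $I_{\la,\mu}(u_\mu) \leq I_{\la,\mu}(w)$ for every other nontrivial solution $w$. The main obstacle is the PS compactness below $c^*$, where the nonlocal critical nonlinearity and the penalization $\mu g$ interact in a delicate way, and which I defer to Section 3.
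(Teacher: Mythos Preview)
Your overall architecture---push the minimax level below the threshold $\frac{n+2-\alpha}{2(2n-\alpha)}S_A^{\frac{2n-\alpha}{n+2-\alpha}}$ using phase-corrected Aubin--Talenti bubbles supported in $\Om$, then invoke the Palais--Smale compactness of Section~3---is exactly what the paper does. The paper minimizes directly on the Nehari manifold $\mc N_{\la,\mu}$ and uses Ekeland's principle to produce a $(PS)_{k_{\la,\mu}}$ sequence, while you run the mountain pass and then identify the mountain pass level with the Nehari infimum; these are equivalent for this functional. Your test function $\phi_\e=\eta\, e^{-iA(x_0)\cdot(x-x_0)}U_{\e,x_0}$ coincides with the paper's $v_\e=\psi U_\e e^{i\gamma}$, $\gamma(x)=-A(0)\cdot x$, used in Lemma~\ref{inf_lev}.

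There is, however, a genuine gap in your argument for the local minimum at $0$. You write that ``on $\Om$ the eigenvalue characterization of $\la_1(\Om)$ gives $\int_\Om(|\nabla_A u|^2-\la|u|^2)\,\mathrm{d}x\ge(1-\la/\la_1(\Om))\int_\Om|\nabla_A u|^2\,\mathrm{d}x$''. This is false for a generic $u\in E$: the Poincar\'e-type inequality $\la_1(\Om)\int_\Om|u|^2\le\int_\Om|\nabla_A u|^2$ holds only for $u\in H^{0,1}_A(\Om)$, i.e.\ for functions vanishing on $\partial\Om$, and $u\chi_\Om$ has no reason to satisfy this. Likewise, on the finite-measure set $\{0<g\le M\}$ your H\"older estimate produces $\la\int_{\{0<g\le M\}}|u|^2\le C\|u\|_\mu^2$ with a constant $C$ that is \emph{independent of $\mu$} and need not be smaller than~$1$, so it cannot be absorbed.

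The paper handles this differently: Lemma~\ref{comp_lem2} proves the coercivity $\big((T_\mu-\la)u,u\big)\ge\beta_\la\|u\|_\mu^2$ by contradiction. If $a_\mu:=\inf\sigma(T_\mu)$ stayed below $(\la+\la_1(\Om))/2$ along a sequence $\mu_m\to\infty$, one takes approximate minimizers $u_m$ with $|u_m|_2=1$; Lemma~\ref{comp_lem1} then forces the weak limit $u$ to lie in $H^{0,1}_A(\Om)$ with $|u|_2=1$, and only at this point can one legitimately invoke $\la_1(\Om)$ to reach a contradiction. So the passage from $\la<\la_1(\Om)$ to uniform coercivity on all of $E$ is not a direct inequality but a concentration--compactness step, and this is precisely where the threshold $\mu(\la)$ comes from. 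Once you replace your splitting argument by Lemma~\ref{comp_lem2}, the rest of your proof goes through.
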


\begin{Theorem}\label{MT2}
Let $\{u_m\}$ be a sequence of non-trivial solutions of $(P_{\la,\mu_m})$ with $\mu_m \rightarrow \infty$ and $I_{\la,\mu_m}(u_m) \rightarrow c< \frac{n+2-\alpha}{2(2n-\alpha)}S_A^{\frac{2n-\alpha}{n+2-\alpha}}$ as $m \rightarrow \infty$. Then $u_{m}$ concentrates at a solution of $(P_\la)$.
\end{Theorem}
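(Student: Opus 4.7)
}
The plan is a concentration-compactness argument: extract a weak limit $u$ of $\{u_m\}$ in $H^1_A(\mb R^n)$, show that $u$ is supported in $\overline{\Om}$ and solves $(P_\la)$, and then use the sub-critical energy hypothesis to rule out any non-trivial residue $v_m := u_m - u$. The a priori bound comes from the identity
\[
I_{\la,\mu_m}(u_m) - \tfrac{1}{2\cdot 2^*_\alpha}\langle I'_{\la,\mu_m}(u_m),u_m\rangle \;=\; \tfrac{n+2-\alpha}{2(2n-\alpha)}\bigl(\|u_m\|_{\mu_m}^2 - \la |u_m|_2^2\bigr),
\]
combined with condition (g2), which via a splitting of $\mb R^n$ into $\{g\le M\}$ and $\{g>M\}$ and the diamagnetic inequality absorbs $|u_m|_2^2$ into $\|u_m\|_{\mu_m}^2$ for $\mu_m$ large. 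This yields uniform boundedness in the $\mu_m$-norm, hence in $H^1_A(\mb R^n)$.

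Along a subsequence, $u_m \rightharpoonup u$ in $H^1_A(\mb R^n)$ and a.e. From $\int \mu_m g|u_m|^2 \le \|u_m\|_{\mu_m}^2 \le C$, Fatou's lemma forces $\int g|u|^2 = 0$, so $u$ vanishes on $\{g>0\}$; since $\overline{\Om}=g^{-1}(0)$ with $\pa\Om$ smooth, $u$ (extended by zero) lies in $H^{0,1}_A(\Om)$. Using test functions $\varphi \in C_c^\infty(\Om,\mb C)$ in $\langle I'_{\la,\mu_m}(u_m),\varphi\rangle=0$ (noting $g\varphi \equiv 0$) and passing to the weak limit, with the Hardy--Littlewood--Sobolev inequality controlling the Choquard term and compactness of $H^{0,1}_A(\Om)\hookrightarrow L^q(\Om)$ for $q<2^*$ handling lower order terms, one identifies $u$ as a weak solution of $(P_\la)$.

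For strong convergence, set $v_m := u_m - u$. Since $g\equiv 0$ on $\text{supp}\,u$, one has the Brezis--Lieb splitting
$\|u_m\|_{\mu_m}^2 = \|u\|_{H^{0,1}_A(\Om)}^2 + \|v_m\|_{\mu_m}^2 + o(1)$, and the nonlocal Brezis--Lieb lemma (as in \cite{my}) yields $B(u_m) = B(u) + B(v_m) + o(1)$. Combined with $\langle I'_\la(u),u\rangle=0$ and $\langle I'_{\la,\mu_m}(u_m),u_m\rangle=0$, this gives
\[
I_{\la,\mu_m}(u_m) = I_\la(u) + \tfrac{1}{2}\|v_m\|_{\mu_m}^2 - \tfrac{1}{2\cdot 2^*_\alpha}B(v_m) + o(1), \qquad \|v_m\|_{\mu_m}^2 - B(v_m) = o(1).
\]
Set $\ell := \lim \|v_m\|_{\mu_m}^2 = \lim B(v_m)$. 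If $\ell>0$, the definition of $S_A$ applied to $v_m$ forces $\ell \ge S_A\,\ell^{1/2^*_\alpha}$, hence $\ell \ge S_A^{(2n-\alpha)/(n+2-\alpha)}$. Since $I_\la(u) = \tfrac{n+2-\alpha}{2(2n-\alpha)}(\|u\|_{H^{0,1}_A(\Om)}^2 - \la|u|_2^2)\ge 0$ for $\la<\la_1(\Om)$, we get $c \ge \tfrac{n+2-\alpha}{2(2n-\alpha)}S_A^{(2n-\alpha)/(n+2-\alpha)}$, contradicting the hypothesis. Therefore $\ell=0$, so $v_m \to 0$ in the $\mu_m$-norm (hence in $\|\cdot\|_A$), and $u_m\to u$ strongly in $H^1_A(\mb R^n)$.

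The principal obstacle is the nonlocal Brezis--Lieb step for $B$: cross terms such as $\int(|x|^{-\alpha}*|u|^{2^*_\alpha})|v_m|^{2^*_\alpha-2}v_m\overline u\,\mathrm{d}x$ must be shown to vanish, which requires treating $|x|^{-\alpha}*|u|^{2^*_\alpha}$ as a bounded multiplier via Hardy--Littlewood--Sobolev while exploiting $v_m\rightharpoonup 0$ in $L^{2^*}$ and local compactness near $\text{supp}\,u \subset \overline{\Om}$. Once this splitting is in place, the energy threshold closes the argument.
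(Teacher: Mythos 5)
Your architecture matches the paper's: extract a weak limit $u$ in $E$, localize $u$ to $\Om$, identify it as a solution of $(P_\la)$, split $B(u_m)=B(u)+B(v_m)+o(1)$, and exclude a nontrivial residue via $S_A$ and the energy threshold. However, two load-bearing steps are not delivered by the mechanisms you cite. First, the a priori bound. Splitting $\mb R^n$ into $\{g\le M\}$ and $\{g>M\}$ controls $\int_{\{g>M\}}|u_m|^2\le \frac{1}{\mu_m M}\|u_m\|_{\mu_m}^2$, but on the finite-measure set $\{g\le M\}$ H\"older, Sobolev and the diamagnetic inequality only give $\la\int_{\{g\le M\}}|u_m|^2\le \la S^{-1}\mc L(\{g\le M\})^{2/n}\|u_m\|_{\mu_m}^2$, and this coefficient need not be less than $1$. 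So the proposed absorption works only for small $\la$, not for all $\la\in(0,\la_1(\Om))$. The correct tool is the coercivity $((T_{\mu_m}-\la)u_m,u_m)\ge\beta_\la\|u_m\|_{\mu_m}^2$ of Lemma \ref{comp_lem2}, which is proved by a contradiction/compactness argument exploiting $\la<\la_1(\Om)$ and $\mu$ large, not by a pointwise splitting.

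Second, you use $|v_m|_2\to 0$, i.e.\ strong convergence $u_m\to u$ in $L^2(\mb R^n)$ (not merely $L^2_{\mathrm{loc}}$), without establishing it. It is hidden both in the identity $\|v_m\|_{\mu_m}^2-B(v_m)=o(1)$ (the two Nehari identities together with the splittings actually yield $\|v_m\|_{\mu_m}^2-\la|v_m|_2^2-B(v_m)=o(1)$) and in your energy expansion, where a term $-\tfrac{\la}{2}|v_m|_2^2$ has been dropped. Without it, the chain $S_A B(v_m)^{1/2^*_\alpha}\le\int_{\mb R^n}|\nabla_A v_m|^2\le\|v_m\|_{\mu_m}^2=B(v_m)+\la|v_m|_2^2+o(1)$ does not force $\ell\ge S_A^{(2n-\alpha)/(n+2-\alpha)}$, and even with $\ell=0$ you would only obtain convergence of the gradients, not convergence in $H^1_A(\mb R^n)$. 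This is exactly what Lemma \ref{comp_lem1} supplies: boundedness in $\|\cdot\|_{\mu_m}$ with $\mu_m\to\infty$ gives $\int_{\{g>M\}}|u_m|^2\le\|u_m\|_{\mu_m}^2/(\mu_m M)\to 0$, while (g2) makes the tail of $\{g\le M\}$ small uniformly in $m$, whence $u_m\to u$ in $L^2(\mb R^n)$. Both gaps are repairable by invoking Lemmas \ref{comp_lem1} and \ref{comp_lem2}, but as written the proposal does not close them.
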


\section{Palais Smale analysis and compactness results}
In this section, we find the Palais Smale critical threshold below which any Palais Smale $(PS)_c$ sequence has a convergent subsequence. We recall that a sequence $\{u_m\} \subset E$ is said to be a $(PS)_c$ sequence (for $I_{\la,\mu}$) if $I_{\la,\mu}(u_m) \rightarrow c$ and $I^{\prime}_{\la,\mu}(u_m) \rightarrow 0$ as $m \rightarrow \infty$. We say that $I_{\la,\mu}$ satisfies the $(PS)_c$ condition if every $(PS)_c$ sequence contains a convergent subsequence.
\begin{Lemma}\label{comp_lem1}
Suppose $\mu_m \geq 1$ and $u_m \in E$ be such that $\mu_m \rightarrow \infty$ as $m \rightarrow \infty$ and {there exists a $K>0$ such that} $\|u_m\|_{\mu_m} < K$, for all $m \in \mb N$. Then there exists a $u \in H^{0,1}_A(\Om)$ such that (upto a subsequence), $u_m \rightharpoonup u$ weakly in $E$ and $u_m \rightarrow u$ strongly in $L^2(\mb R^n)$  as $m \rightarrow \infty$.
\end{Lemma}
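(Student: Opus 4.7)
The plan is to exploit the three ingredients available: the boundedness in the $\mu_m$-norm, the diamagnetic-type compact embedding on bounded smooth domains, and the hypothesis $(g2)$ which gives global control at infinity. First I would observe that since $\mu_m \geq 1$ and $g \geq 0$, the norms satisfy $\|u_m\|_A \leq \|u_m\|_{\mu_m} \leq K$, so $\{u_m\}$ is bounded in $H^1_A(\mb R^n)$ and also in $E$. Passing to a subsequence, I get $u_m \rightharpoonup u$ weakly in $E$ (hence in $H^1_A(\mb R^n)$, in $L^{2^*}(\mb R^n, \mb C)$, and a.e.\ after a further subsequence) for some $u \in E$.

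Next I would show $u$ vanishes outside $\overline{\Om}$ and therefore belongs to $H^{0,1}_A(\Om)$. The bound $\int_{\mb R^n} g(x)|u_m|^2~\mathrm{d}x \leq \|u_m\|_{\mu_m}^2/\mu_m \leq K^2/\mu_m \to 0$ together with Fatou's lemma (using $g \geq 0$ and a.e.\ convergence of $u_m$ to $u$) yields $\int_{\mb R^n} g(x)|u|^2~\mathrm{d}x = 0$. By $(g1)$, $g>0$ on $\mb R^n\setminus\overline{\Om}$, so $u \equiv 0$ a.e.\ outside $\overline{\Om}$; since $\Om$ has smooth boundary, the restriction $u|_\Om$ lies in $H^{0,1}_A(\Om)$.

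For the strong $L^2$ convergence, given $\e>0$ I would decompose
\[
\int_{\mb R^n}|u_m-u|^2~\mathrm{d}x = \int_{B_R}|u_m-u|^2 + \int_{B_R^c\cap\{g\leq M\}}|u_m-u|^2 + \int_{\{g> M\}}|u_m-u|^2.
\]
On the region $\{g>M\}$, since $g(x) > M$ there, $\int_{\{g>M\}}|u_m|^2 \leq (\mu_m M)^{-1}\|u_m\|_{\mu_m}^2 \leq K^2/(\mu_m M)$, and the same estimate passes to $u$ by weak lower semicontinuity, so this term vanishes as $m\to\infty$. On $B_R^c\cap\{g\leq M\}$, H\"older's inequality with exponents $(2^*/2, n/2)$ gives
\[
\int_{B_R^c\cap\{g\leq M\}}|u_m-u|^2~\mathrm{d}x \leq \big(|u_m-u|_{2^*}\big)^2\,\mc L\big(B_R^c\cap\{g\leq M\}\big)^{2/n},
\]
and $(g2)$ together with $|u_m-u|_{2^*}\leq 2b_{2^*}K$ makes this small by choosing $R$ large, since $\mc L\{g\leq M\}<\infty$ forces $\mc L(B_R^c\cap\{g\leq M\})\to 0$. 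Finally on $B_R$ the compact embedding $H^1_A(B_R) \hookrightarrow L^2(B_R,\mb C)$ (following from $A \in L^n_{\rm loc}$ and the Kondrachov theorem via the diamagnetic inequality) gives $u_m \to u$ in $L^2(B_R,\mb C)$ for each fixed $R$.

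The main technical point I expect to be a little delicate is the transition between the three regions, in particular ensuring the tail bound $\int_{\{g>M\}}|u|^2~\mathrm{d}x$ for the limit $u$ is small uniformly (which is why I would use weak lower semicontinuity to transfer the bound from $u_m$ to $u$) and carefully choosing $R$ before $m$ so that the compact embedding on $B_R$ can close the argument. A standard diagonal-type $\e$/$R$/$m$ choice then yields $u_m\to u$ strongly in $L^2(\mb R^n,\mb C)$.
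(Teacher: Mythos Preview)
Your argument follows the paper's exactly: extract a weak limit in $E$, show $u$ vanishes off $\overline\Om$, then split $\mb R^n$ into $B_R$, $B_R^c\cap\{g\le M\}$ and $\{g>M\}$ and handle each piece via local compactness, H\"older together with $(g2)$, and $\mu_m\to\infty$ respectively (the paper uses the exhausting sets $C_r=\{|x|\le r,\ g\ge 1/r\}$ rather than Fatou for the vanishing step, but this is cosmetic). One small correction: the inequality $\|u_m\|_A\le\|u_m\|_{\mu_m}$ is false as written, since $g\equiv 0$ on $\Om$ means the plain $L^2$-part of $\|\cdot\|_A$ is not dominated by $\mu_m g|u_m|^2$; what you do get immediately is $\|u_m\|_E\le\|u_m\|_{\mu_m}$ (because $\mu_m\ge1$), and boundedness in $H^1_A(\mb R^n)$ then follows from \eqref{mg-eq1} with $q=2$.
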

\begin{proof}
Since the norms $\|\cdot\|_E$ and $\|\cdot\|_\mu$ are equivalent, we have $\|u_m\|_E^2 < K^\prime$, for some constant $K^\prime >0$. So there exists $u \in E$ such that $ u_m \rightharpoonup u$ weakly in $E$ and $u_m \rightarrow u$ strongly in $L^2_{\text{loc}}(\mb R^n)$ as $m \rightarrow \infty$. Let $C_r = \{x : |x|\leq r,\; g(x) \geq 1/r\}$, $r \in \mb N$. Then we can easily see that
\[\int_{C_r}|u_m|^2~\mathrm{d}x \leq r\int_{C_r} g(x)|u_m|^2~\mathrm{d}x \leq \frac{r}{\mu_m}\|u_m\|^2_{\mu_m} \leq \frac{rK}{\mu_m} \rightarrow 0 \; \text{as} \; m \rightarrow \infty. \]
This holds for every $r$ which implies that $u \equiv 0$ in $\mb R^n\setminus \Om$. Since $\partial \Om$ is smooth, we have $u \in H^{0,1}_A(\Om)$. The next step is to show that $u_m \rightarrow u$ strongly in $L^2({\mb R^n})$. Let $D= \{x \in \mb R^n: \; g(x) \leq M\}$, where $M$ is defined as in $(g2)$. Then
\begin{equation}\label{comp1}
\int_{\mb R^n \setminus D}u_m^2~\mathrm{d}x \leq  \frac{1}{\mu_m M} \int_{\mb R^n \setminus D}\mu_m g(x)u_m^2 ~\mathrm{d}x \leq \frac{K^\prime}{\mu_m M} \rightarrow 0 \; \text{as}\; m \rightarrow \infty.
\end{equation}
Let $B_R = \{x \in \mb R^n :\; |x| \leq R\}$ and $q \in \left(1, \frac{n}{n-2}\right)$ such that $q^\prime = \frac{q}{q-1}$. Then using \eqref{mg-eq1} and equivalence of norms {$\|\cdot\|_\mu$ and $\|\cdot\|_E$}, we have
\[\int_{B_R^c \cap D} |u_m-u|^2~\mathrm{d}x \leq |u_m-u|^2_{2q} (\mc L(B_R^c \cap D))^{1/q^\prime} \leq C_1 b_{2q}^2 \|u_m-u\|_E^2(\mc L(B_R^c \cap D))^{1/q^\prime},\]
where $C_1$ is a positive constant and $B_R^c= \mb R^n \setminus B_R$. Hence by $(g2)$ we get
\begin{equation}\label{comp2}
\int_{B_R^c \cap D} |u_m-u|^2~\mathrm{d}x \rightarrow 0\; \text{as}\; R \rightarrow \infty.
\end{equation}
 Lastly, as we know $u_m \rightarrow u $ strongly in $L^2_{\text{loc}}(\mb R^n)$ we get
\begin{equation}\label{comp3}
\int_{B_R}|u_m -u|^2 ~\mathrm{d}x \rightarrow 0 \; \text{as}\; m \rightarrow \infty.
\end{equation}
Therefore using \eqref{comp1}, \eqref{comp2} and \eqref{comp3}, we get $u_m \rightarrow u$ strongly in $L^2(\mb R^n)$ as $m \rightarrow \infty$.\hfill{\QED}
\end{proof}

\noi Now let $T_\mu := -\Delta_A + \mu g(x)$ , where $-\De_A = (-i\nabla +A)^2$, be an operator defined on $E$. {Also let $v \in E$ and denote $(\cdot,\cdot)$ as $L^2$ inner product then we write}
\[\big(T_\mu(u),v\big)= \text{Re}\left(\int_{\mb R^n}(\nabla_A u \overline{\nabla_A v}+ \mu g(x)u\overline v)~\mathrm{d}x\right).\]
Clearly $T_\mu$ is a self adjoint operator and if $a_\mu := \inf \sigma(T_\mu)$, i.e. the infimum of the spectrum of $T_\mu$, then $a_\mu$ can be characterized as
\[0 \leq a_\mu = \inf \{\big(T_\mu(u),u\big): \; u \in E,\; |u|_2=1\}= \inf \{\|u\|_\mu^2:\; u\in E,\;|u|_2=1\}.\]
Thus $a_\mu $ is nondecreasing in $\mu$. Therefore we get
\[\big((T_\mu-\la)u,u\big) = \int_{\mb R^n}(|\nabla_A u|^2 + \mu g(x)|u|^2- \la |u|^2)~ \mathrm{d}x.\]

%%%%%%%%%%%%%%%%%%%%%%%%%%%%%%%%%%%%%%%%%%%%%%%%%%%%%%%%%%%%%%%%%%%%%%%%%%%%%%%%%%%%%%%%%%%%%%%%%%%%%%%%%%%%%%%%%%%%%%%%%%%%%%%%%%%%%%%%%%%%%%%%55
\noi In the next lemma, we will show that the map $(T_\mu -\la)$ is coercive.
\begin{Lemma}\label{comp_lem2}
For each $\la \in (0, \la_1(\Om))$, there exists a $\mu(\la)>0$  such that $a_\mu \geq (\la +\la_1(\Om))/2$ whenever $\mu \geq \mu(\la)$. As a consequence
\[\big((T_\mu-\la)u,u) \geq \beta_\la \|u\|_\mu^2\]
for all $u \in E$, $\mu \geq \mu(\la)$, where $\beta_\la := (\la_1(\Om)-\la)/(\la_1(\Om)+\la)$.
\end{Lemma}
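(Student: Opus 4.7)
My plan is to prove the first statement by contradiction and then derive the coercivity bound by an elementary algebraic manipulation using $|u|_2^2 \le a_\mu^{-1}\|u\|_\mu^2$.

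For the first claim, I would assume to the contrary that there exists a sequence $\mu_m \to \infty$ with $a_{\mu_m} < (\lambda + \lambda_1(\Omega))/2$. By the variational characterization of $a_{\mu_m}$, I can pick $u_m \in E$ with $|u_m|_2 = 1$ and $\|u_m\|_{\mu_m}^2 \le (\lambda + \lambda_1(\Omega))/2 + 1/m$. In particular, $\{\|u_m\|_{\mu_m}\}$ is bounded, so Lemma \ref{comp_lem1} applies: passing to a subsequence, $u_m \rightharpoonup u$ weakly in $E$ and $u_m \to u$ strongly in $L^2(\mathbb R^n)$, with $u \in H^{0,1}_A(\Omega)$. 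The $L^2$ convergence gives $|u|_2 = 1$, so $u \not\equiv 0$ and in fact $u$ is supported in $\overline\Omega$.

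Now the key step is to extract a lower bound using weak lower semicontinuity. Since $\mu_m g(x) \ge 0$, I have
\[
\int_{\mathbb R^n}|\nabla_A u_m|^2~\mathrm{d}x \le \|u_m\|_{\mu_m}^2,
\]
so by weak lower semicontinuity of the Dirichlet-type seminorm $u \mapsto \int|\nabla_A u|^2$,
\[
\int_{\Omega}|\nabla_A u|^2~\mathrm{d}x = \int_{\mathbb R^n}|\nabla_A u|^2~\mathrm{d}x \le \liminf_{m\to\infty}\|u_m\|_{\mu_m}^2 \le \frac{\lambda + \lambda_1(\Omega)}{2}.
\]
On the other hand, since $u \in H^{0,1}_A(\Omega)$ with $|u|_2 = 1$, the variational definition of $\lambda_1(\Omega)$ yields $\int_{\Omega}|\nabla_A u|^2~\mathrm{d}x \ge \lambda_1(\Omega)$, contradicting $\lambda < \lambda_1(\Omega)$. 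I expect this contradiction step to be the heart of the argument; the only subtle point is ensuring that the limit $u$ really lies in $H^{0,1}_A(\Omega)$ and is nontrivial, but both facts are supplied by Lemma \ref{comp_lem1} together with the $L^2$-normalization.

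For the consequence, once $a_\mu \ge (\lambda + \lambda_1(\Omega))/2$ is established, the characterization of $a_\mu$ gives $|u|_2^2 \le \frac{2}{\lambda + \lambda_1(\Omega)}\|u\|_\mu^2$ for every $u \in E$. Therefore
\[
\bigl((T_\mu - \lambda)u, u\bigr) = \|u\|_\mu^2 - \lambda |u|_2^2 \ge \left(1 - \frac{2\lambda}{\lambda + \lambda_1(\Omega)}\right)\|u\|_\mu^2 = \frac{\lambda_1(\Omega) - \lambda}{\lambda_1(\Omega) + \lambda}\|u\|_\mu^2 = \beta_\lambda\|u\|_\mu^2,
\]
which is the desired coercivity estimate. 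This second part is a short computation once the spectral lower bound is in hand.
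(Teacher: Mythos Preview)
Your proposal is correct and follows essentially the same route as the paper: a contradiction argument using Lemma~\ref{comp_lem1} to produce a limit $u\in H^{0,1}_A(\Omega)$ with $|u|_2=1$, followed by weak lower semicontinuity of $\int|\nabla_A\cdot|^2$ to contradict the definition of $\lambda_1(\Omega)$, and then the identical algebraic step for the coercivity bound. The only cosmetic difference is that the paper selects $u_m$ as an approximate minimizer for $a_{\mu_m}$ and tracks $\theta=\lim a_{\mu_m}$, whereas you pick $u_m$ directly below the threshold $(\lambda+\lambda_1(\Omega))/2+1/m$; both lead to the same contradiction.
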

\begin{proof}
Assume by contradiction that there exist a sequence $\mu_m \rightarrow \infty$ such that $a_{\mu_m}< (\la +\la_1(\Om))/2$ for all $m$ and $a_{\mu_m} \rightarrow \theta \leq (\la +\la_1(\Om))/2$. Let us consider a minimizing sequence $\{u_m\}\in E$ such that $|u_m|_2 =1$ and $((T_{\mu_m}-a_{\mu_m})u_m,u_m)\rightarrow 0$. This implies
\begin{equation*}
\begin{split}
\|u_m\|_{\mu_m}^2 &= \int_{\mb R^n} (|\nabla_Au|^2 + \mu_m g(x)|u_m|^2)~\mathrm{d}x\\
&= \big((T_{\mu_m}-a_{\mu_m})u_m,u_m\big)+ a_{\mu_m}(u_m,u_m)\\
& \leq \big((T_{\mu_m}-a_{\mu_m})u_m,u_m\big)+ (1+a_{\mu_m})|u_m|^2_2\\
& \leq {2}(1+\la_1(\Om))
\end{split}
\end{equation*}
for large $m$, using $\la < \la_1(\Om)$ and $\theta \leq (\la +\la_1(\Om))/2 $. So using Lemma \ref{comp_lem1}, we get $u \in H^{0,1}_A(\Om)$ such that  $u_m \rightharpoonup u$ weakly in $E$ and $u_m \rightarrow u$ strongly in $L^2(\mb R^n)$ as $ m \rightarrow \infty$. Therefore $|u|_2=1$ and $\liminf\limits_{m\rightarrow \infty}|\nabla_A u_m|^2_2 \geq |\nabla_A u|_2^2$. Since $g\geq 0$ and $\mu_m \rightarrow \infty$, we have
\begin{equation*}
\begin{split}
\int_{\Om}(|\nabla_Au|^2- \theta |u|^2) &\leq \liminf\limits_{m \rightarrow \infty}\int_{\mb R^n} (|\nabla_Au_m|^2 + \mu_m g(x)|u_m|^2- a_{\mu_m}|u_m|^2 )~\mathrm{d}x\\
&= \liminf\limits_{m \rightarrow \infty}\big((T_{\mu_m}-a_{\mu_m})u_m, u_m\big) =0.
\end{split}
\end{equation*}
Hence
\[\int_{\Om}|\nabla_Au|^2~\mathrm{d}x \leq \theta \leq \frac{\la+\la_1(\Om)}{2}< \la_1(\Om)\]
which is a contradiction to the definition of $\la_1(\Om)$. {Therefore there exists a  $\mu(\la)>0$  such that $a_\mu \geq (\la +\la_1(\Om))/2$ whenever $\mu \geq \mu(\la)$.} For the second part, let $u \in E$ and $\mu \geq \mu(\la)$ then  $a_\mu \leq \frac{\|u\|_\mu^2}{|u|^2_2}$
which gives $$\la|u|^2_2 \leq \frac{2\la\|u\|_\mu^2}{\la+\la_1(\Om)}.$$ Therefore
\[\big((T_\mu-\la)u,u\big)\geq \|u\|_\mu^2-\la|u|^2_2 \geq \frac{\la_1(\Om)-\la}{\la_1(\Om) +\la} \|u\|_\mu^2.\]
\hfill{\QED}
\end{proof}

\noi Our next lemma assures that all $(PS)_c$ sequences are bounded.
%%%%%%%%%%%%%%%%%%%%%%%%%%%%%%%%%%%%%%%%%%%%%%%%%%%%%%%%%%%%%%%%%%%%%%%%%%%%%%%%%%%%%%%%%%%%%%%%%%%%%%%%%%%%%%%%%%%%%%%%%%%%%%%%%%%%%%%%%%%
\begin{Lemma}\label{comp_lem3}
Let $\{u_m\}$ be a $(PS)_c$ sequence for $I_{\la,\mu}$. If $\la \in (0,\la_1(\Om))$ and $\mu \geq \mu(\la)$, then $\{u_m\}$ is bounded in $E$ and
\[\lim\limits_{m \rightarrow \infty}\big((T_\mu-\la)u_m,u_m\big)= \lim\limits_{m \rightarrow \infty} B(u_m) = \frac{2c(2n-\alpha)}{(n+2-\alpha)},\]
where $B(\cdot)$ is defined in \eqref{B(u)}.
\end{Lemma}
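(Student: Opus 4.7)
The plan is the standard Brezis--Nirenberg trick: form a linear combination of $I_{\la,\mu}(u_m)$ and $\langle I^\prime_{\la,\mu}(u_m), u_m\rangle$ that cancels the nonlocal term, then use the coercivity estimate of Lemma \ref{comp_lem2} to extract boundedness, and finally pass to the limit to get the two prescribed convergences.

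First I would compute, directly from the definitions of $I_{\la,\mu}$ and its derivative,
\[
I_{\la,\mu}(u_m) - \frac{1}{22^*_\alpha}\langle I^\prime_{\la,\mu}(u_m), u_m\rangle = \left(\frac{1}{2}-\frac{1}{22^*_\alpha}\right)\big((T_\mu-\la)u_m,u_m\big),
\]
and use $2^*_\alpha = \frac{2n-\alpha}{n-2}$ to rewrite the coefficient as $\frac{2^*_\alpha-1}{22^*_\alpha} = \frac{n+2-\alpha}{2(2n-\alpha)}$. This is the key algebraic identity that will pin down the numerical value in the conclusion.

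Next, since $\{u_m\}$ is a $(PS)_c$ sequence, $I_{\la,\mu}(u_m) = c + o(1)$ and $|\langle I^\prime_{\la,\mu}(u_m), u_m\rangle| \le \|I^\prime_{\la,\mu}(u_m)\|_{E^\prime}\|u_m\|_E = o(1)\|u_m\|_E$. Using the equivalence of the norms $\|\cdot\|_E$ and $\|\cdot\|_\mu$ together with Lemma \ref{comp_lem2}, which gives $\big((T_\mu-\la)u_m,u_m\big) \ge \beta_\la \|u_m\|_\mu^2$, the identity above yields an inequality of the form
\[
\frac{n+2-\alpha}{2(2n-\alpha)}\,\beta_\la\, \|u_m\|_\mu^2 \le c + 1 + o(1)\,\|u_m\|_\mu
\]
for all sufficiently large $m$. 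This quadratic-versus-linear comparison forces $\{\|u_m\|_\mu\}$, and hence $\{u_m\}$, to be bounded in $E$.

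Once boundedness is established, $\langle I^\prime_{\la,\mu}(u_m), u_m\rangle = o(1)$, so
\[
\big((T_\mu-\la)u_m,u_m\big) = B(u_m) + o(1).
\]
Substituting this into $I_{\la,\mu}(u_m) = \tfrac{1}{2}\big((T_\mu-\la)u_m,u_m\big) - \tfrac{1}{22^*_\alpha}B(u_m) = c + o(1)$ and solving produces $B(u_m) \to \frac{2c(2n-\alpha)}{n+2-\alpha}$, and then the same limit for $\big((T_\mu-\la)u_m,u_m\big)$ by the previous display. The main (though mild) obstacle is purely the bookkeeping with the coercivity constant $\beta_\la$: without the restriction $\la \in (0,\la_1(\Om))$ and $\mu \ge \mu(\la)$, the bilinear form $(T_\mu-\la)$ need not control $\|u_m\|_\mu^2$ from below, so the boundedness argument would break down.
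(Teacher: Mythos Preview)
Your proposal is correct and follows essentially the same approach as the paper: both form the combination $I_{\la,\mu}(u_m)-\tfrac{1}{22^*_\alpha}\langle I'_{\la,\mu}(u_m),u_m\rangle$ to isolate $\big((T_\mu-\la)u_m,u_m\big)$, invoke Lemma~\ref{comp_lem2} for coercivity to obtain boundedness, and then pass to the limit. The only cosmetic difference is that the paper also writes down the second combination $I_{\la,\mu}(u_m)-\tfrac12\langle I'_{\la,\mu}(u_m),u_m\rangle=\tfrac{n+2-\alpha}{2(2n-\alpha)}B(u_m)$ explicitly and reads off both limits directly, whereas you recover $B(u_m)$ by first using $\langle I'_{\la,\mu}(u_m),u_m\rangle=o(1)$ to get $\big((T_\mu-\la)u_m,u_m\big)=B(u_m)+o(1)$ and then substituting into the energy; the content is identical.
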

\begin{proof}
Using definitions of $I_{\la,\mu}$ and $T_\mu$, we get
\begin{equation}\label{PSbdd1}
\begin{split}
I_{\la,\mu}(u_m)- \frac{1}{22^*_\alpha} (I_{\la,\mu}^{\prime}(u_m),u_m) &= \left( \frac{2^*_\alpha-1}{22^*_\alpha}\right)  \int_{\mb R^n}(|\nabla_Au|^2 + \mu g(x)|u_m|^2 - \la |u_m|^2)~\mathrm{d}x\\
& = \frac{n+2-\alpha}{2(2n-\alpha)}\big((T_\mu-\la)u_m,u_m\big)
\end{split}
\end{equation}
and
\begin{equation}\label{PSbdd2}
I_{\la,\mu}(u_m)- \frac{1}{2} \langle I_{\la,\mu}^{\prime}(u_m),u_m \rangle = \left(\frac{2^*_\alpha-1}{22^*_\alpha}\right) B(u_m)= \frac{n+2-\alpha}{2(2n-\alpha)} B(u_m).
\end{equation}
Using Lemma \ref{comp_lem2} and \eqref{PSbdd1}, we get
\begin{equation*}
\begin{split}
c-\frac{1}{22^*_\alpha}o(\|u_m\|_\mu)  \geq \frac{n+2-\alpha}{2(2n-\alpha)} \big((T_\mu-\la)u_m,u_m\big) \geq {\frac{n+2-\alpha}{2(2n-\alpha)}} \beta_\la \|u_m\|_\mu^2.
\end{split}
\end{equation*}
This implies $\{u_m\}$ is {a bounded sequence} in $E$. Taking limit $m \rightarrow \infty$ in \eqref{PSbdd1}, we get
\[\lim\limits_{m \rightarrow \infty} {\big((T_\mu-\la)u_m,u_m\big)}  = \frac{2c(2n-\alpha)}{n+2-\alpha},\]
and taking limit $m \rightarrow \infty$ in \eqref{PSbdd2}, we get
\[\lim\limits_{m \rightarrow \infty} B(u_m) = \frac{2c(2n-\alpha)}{n+2-\alpha}.\]
This completes the proof.\hfill{\QED}
\end{proof}

%%%%%%%%%%%%%%%%%%%%%%%%%%%%%%%%%%%%%%%%%%%%%%%%%%%%%%%%%%%%%%%%%%%%%%%%%%%%%%%%%%%%%%%%%%%%%%%%%%%%%%%%%%%%%%%%%%%%%%%%%%%%%%%%%%%%%%%%
\noi Let
\begin{equation*}
S_A := \inf\limits_{{u \in H^1_A(\mb R^n)\setminus\{0\}}} \frac{\int_{\mb R^n}|\nabla_Au|^2 ~\mathrm{d}x}{B(u)^{\frac{n-2}{2n-\alpha}}}.
\end{equation*}
 For the preceding sections, {enlarging $\mu(\la)$} if necessary, we assume $\mu(\la)\geq \la/M$, where $M$ is defined in $(g2)$. Thus,
\begin{equation}\label{lambdd}
\mu M-\la \geq 0,\; \text{for all}\; \mu \geq \mu(\la).
\end{equation}
%Before proving our Proposition \ref{comp_lem4}, we recall Lemma $3.5$ of \cite{Ack} here.
%\begin{Lemma}\label{Lemma3.5}
%$\Psi$ is a $C^1$ functional where $\Psi$ and $\Psi^\prime$ map bounded sets to bounded sets. $\Psi$ is weakly sequentially lower semicontinuous and $\Psi^\prime$ is weakly sequentially semicontinuous. If $u_m \rightharpoonup v$ weakly in $E$, then there exists a subsequence $v_m\rightharpoonup v$ weakly such that
%$$ \Psi(u_m)- \Psi(u_m-v_m) \rightarrow \Psi(v)\; \text{in}\; \mb R$$
%$$  \Psi^\prime(u_m)- \Psi^\prime(u_m-v_m) \rightarrow \Psi^\prime(v)\; \text{in}\; E^*.$$
%\end{Lemma}
\begin{Proposition}\label{comp_lem4}
If $\la \in (0,\la_1(\Om))$ and $\mu \geq \mu(\la)$, then the functional $I_{\la,\mu}$ satisfies the $(PS)_c$ condition, for all $$c \in \left(-\infty,  \frac{n+2-\alpha}{2(2n-\alpha)} S_A^{\frac{2n-\alpha}{n+2-\alpha}}\right).$$
\end{Proposition}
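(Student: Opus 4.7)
The plan is to run the standard Brezis-Nirenberg concentration-compactness scheme, with the nonlocal Choquard term handled via the Hardy-Littlewood-Sobolev inequality and a nonlocal Brezis-Lieb splitting. First, let $\{u_m\}$ be a $(PS)_c$ sequence for $I_{\la,\mu}$. By Lemma \ref{comp_lem3} it is bounded in $E$, so upon passing to a subsequence $u_m \rightharpoonup u$ weakly in $E$, $u_m \to u$ strongly in $L^p_{\text{loc}}(\mb R^n,\mb C)$ for every $p \in [1,2^*)$ (via the diamagnetic inequality of Theorem \ref{dia_eq} and Rellich-Kondrachov), and $u_m \to u$ almost everywhere. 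The first step is to show that $u$ is a critical point of $I_{\la,\mu}$: for $\varphi \in C_c^{\infty}(\mb R^n,\mb C)$, the linear pieces of $\langle I'_{\la,\mu}(u_m),\varphi\rangle$ pass to the limit by weak convergence in $E$, while the Choquard term converges by almost-everywhere convergence combined with the uniform $L^{2^*}$ bound and Proposition \ref{H-L_ineq}. Density yields $I'_{\la,\mu}(u)=0$, hence $((T_\mu-\la)u,u)=B(u)$ and
\begin{equation*}
I_{\la,\mu}(u) = \frac{n+2-\alpha}{2(2n-\alpha)} B(u) \geq 0.
\end{equation*}

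Set $v_m := u_m - u$. I then apply the Brezis-Lieb lemma in the Hilbert space $E$ and its nonlocal analogue for $B$ (which requires only a.e.\ convergence plus the uniform $L^{2^*}$ bound, and is available in the Choquard literature) to obtain
\begin{equation*}
\|u_m\|_\mu^2 = \|u\|_\mu^2 + \|v_m\|_\mu^2 + o(1), \qquad B(u_m) = B(u) + B(v_m) + o(1).
\end{equation*}
Together with $I'_{\la,\mu}(u_m)\to 0$ and $I'_{\la,\mu}(u)=0$ these yield
\begin{equation*}
I_{\la,\mu}(v_m) \to c - I_{\la,\mu}(u), \qquad ((T_\mu-\la)v_m,v_m) - B(v_m) = o(1).
\end{equation*}
Moreover, since $v_m \rightharpoonup 0$ in $E$ and $v_m \to 0$ in $L^2_{\text{loc}}$, the truncation argument used in Lemma \ref{comp_lem1}, combined with the finite-measure property of $D=\{g\leq M\}$ from $(g2)$ and the uniform $L^{2^*}$ bound (Hölder on $D\setminus B_R$), gives $v_m \to 0$ in $L^2(D)$. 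Since $\mu g - \la \geq \mu M - \la \geq 0$ on $\mb R^n\setminus D$ by \eqref{lambdd}, this produces
\begin{equation*}
((T_\mu-\la)v_m,v_m) \geq \int_{\mb R^n}|\nabla_A v_m|^2\,\mathrm{d}x + o(1).
\end{equation*}

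To conclude, suppose for contradiction that $u_m \not\to u$ strongly in $E$. Lemma \ref{comp_lem2} then forces $\ell := \lim_{m\to\infty} B(v_m) = \lim_{m\to\infty} ((T_\mu-\la)v_m,v_m) > 0$. Using the definition of $S_A$ applied to $v_m$ and the previous inequality,
\begin{equation*}
S_A\, B(v_m)^{\frac{n-2}{2n-\alpha}} \leq \int_{\mb R^n}|\nabla_A v_m|^2\,\mathrm{d}x \leq ((T_\mu-\la)v_m,v_m) + o(1),
\end{equation*}
so passing to the limit gives $S_A\, \ell^{(n-2)/(2n-\alpha)} \leq \ell$, that is, $\ell \geq S_A^{(2n-\alpha)/(n+2-\alpha)}$. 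Because $\langle I'_{\la,\mu}(v_m),v_m\rangle \to 0$, the same identity as in \eqref{PSbdd1} applied to $v_m$ gives $I_{\la,\mu}(v_m) \to \frac{n+2-\alpha}{2(2n-\alpha)}\ell$, and combining with $I_{\la,\mu}(u)\geq 0$ produces
\begin{equation*}
c = I_{\la,\mu}(u) + \frac{n+2-\alpha}{2(2n-\alpha)}\ell \geq \frac{n+2-\alpha}{2(2n-\alpha)} S_A^{\frac{2n-\alpha}{n+2-\alpha}},
\end{equation*}
contradicting the assumed bound on $c$. Hence $\ell = 0$, and Lemma \ref{comp_lem2} gives $\|v_m\|_\mu \to 0$, i.e.\ $u_m \to u$ in $E$. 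The main obstacle I anticipate is making the nonlocal Brezis-Lieb splitting of $B$ rigorous on all of $\mb R^n$ (where compactness can fail by concentration or by escape to infinity) and verifying the convergence of the Choquard nonlinearity against test functions; the controlled behaviour of $\mu g - \la$ on $\mb R^n\setminus D$ forced by \eqref{lambdd} is what prevents loss of mass at infinity from sabotaging the energy comparison.
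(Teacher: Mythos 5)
Your proposal is correct and follows essentially the same route as the paper: boundedness via Lemma \ref{comp_lem3}, identification of the weak limit as a critical point through Hardy--Littlewood--Sobolev, the nonlocal Brezis--Lieb splitting of $B$ applied to $v_m=u_m-u$, vanishing of $v_m$ in $L^2(D)$ via the truncation argument of Lemma \ref{comp_lem1} together with \eqref{lambdd}, and the comparison with $S_A$ that forces the defect level $\ell$ to be either $0$ or at least $S_A^{(2n-\alpha)/(n+2-\alpha)}$, the latter being excluded by the bound on $c$. The only cosmetic difference is that you write the energy identity $c=I_{\la,\mu}(u)+\frac{n+2-\alpha}{2(2n-\alpha)}\ell$ explicitly, whereas the paper reaches the same contradiction using only $I_{\la,\mu}(u)\geq 0$.
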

\begin{proof}
Let $\{u_m\} \subset E$ be a sequence such that $I_{\la,\mu}(u_m)\rightarrow c< \frac{n+2-\alpha}{2(2n-\alpha)} S_A^{\frac{2n-\alpha}{n+2-\alpha}}$ and $I_{\la,\mu}^\prime(u_m) \rightarrow 0$ as $m \rightarrow \infty$. By Lemma \ref{comp_lem3}, $\{u_m\}$ is bounded in $E$ that is $\|u_m\|_\mu\leq K_1$ for some constant $K_1>0$ and for all $m$. Therefore, there {exists} a subsequence(still denoted by $\{u_m\}$) such that $u_m \rightharpoonup u$ weakly in $E$, $u_m \rightharpoonup u$ weakly in $L^{2^*}(\mb R^n)$,  $u_m \rightarrow u$ strongly in $L^2_{\text{loc}}(\mb R^n)$ and $u_m(x) \rightarrow u(x)$ a.e. for $x \in \mb R^n$.
% Using {\eqref{mg-eq1} and \eqref{h-l1} we get that} there exists a $K_2>0$ such that
%\[|B(u_m)| \leq K_2.\]
{This implies that as $m \to \infty$
\[|u_m|^{2^*_\alpha} \rightharpoonup |u|^{2^*_\alpha} \; \text{in}\; L^{\frac{2n}{2n-\alpha}}(\mb R^n)\; \text{and}\; |u_m|^{2^*_\alpha-2}u_m \rightharpoonup |u|^{2^*_\alpha-2}u \; \text{in}\; L^{\frac{2n}{n+2-\alpha}}(\mb R^n).\]
Therefore by Hardy-Littlewood-Sobolev inequality we get
\[|x|^{\alpha}\ast |u_m|^{2^*_\alpha} \rightharpoonup |x|^{\alpha}\ast |u|^{2^*_\alpha} \; \text{in}\; L^{\frac{2n}{\alpha}}(\mb R^n) \; \text{as}\; m \to \infty. \]
Hence we obtain that
\[(|x|^{\alpha}\ast |u_m|^{2^*_\alpha})|u_m|^{2^*_\alpha-2}u_m \rightharpoonup (|x|^{\alpha}\ast |u|^{2^*_\alpha})|u|^{2^*_\alpha-2}u \; \text{in}\; L^{\frac{2n}{n+2}}(\mb R^n) \; \text{as}\; m \to \infty. \]
This implies that for any $\phi \in E \subset L^{2^*}(\mb R^n)$
\[\int_{\mb R^n}(|x|^{\alpha}\ast |u_m|^{2^*_\alpha})|u_m|^{2^*_\alpha-2}u_m \phi~\mathrm{d}x \rightarrow \int_{\mb R^n}(|x|^{\alpha}\ast |u|^{2^*_\alpha})|u|^{2^*_\alpha-2}u \phi~\mathrm{d}x  \; \text{as}\; m \to \infty. \]
}Thus we get
$ \langle I_{\la,\mu}^\prime(u),\phi\rangle= \lim\limits_{m \rightarrow \infty}\langle I_{\la,\mu}^\prime(u_m),\phi\rangle=0$.
{Therefore} $u$ is a weak  solution of $(P_{\la,\mu})$. \\
Let $\widetilde{u_m}= u_m-u$, so by Lemma $2.3$ of \cite{my} we have
\begin{equation}\label{PSlim1}
B(u_m)-B(\widetilde{u_m}) \rightarrow B(u)\; \text{as}\; m \to \infty.
\end{equation}
Also since $I^\prime_{\la,\mu}(u_m) \rightarrow 0$, we get
\begin{equation}\label{PSlim2}
\big((T_\mu-\la)u_m,u_m\big)- B(u_m) \rightarrow 0\; \text{as}\; m \rightarrow \infty.
\end{equation}
Then using \eqref{PSlim1} and \eqref{PSlim2}, we get
\[\lim\limits_{m \rightarrow \infty} \left( \big((T_\mu-\la)\widetilde{u_m}, \widetilde{u_m}\big)- B(\widetilde{u_m})\right)=0.\]
Let $\lim\limits_{m \rightarrow \infty}\big((T_\mu-\la)\widetilde{u_m}, \widetilde{u_m}\big)= \lim\limits_{m \rightarrow \infty} B(\widetilde{u_m})=d\; \text{(say)}$. Then it is easy to show that $I_{\la,\mu}(u)\geq 0$ and using this we get
\[\frac{n+2-\alpha}{2(2n-\alpha)} S_A^{\frac{2n-\alpha}{n+2-\alpha}}>c = \lim\limits_{m \rightarrow \infty} I_{\la,\mu}(u_m) \geq \frac12 \int_{\mb R^n}|\nabla_A\widetilde{u_m}|^2~\mathrm{d}x - \frac{1}{22^*_\alpha}B(\widetilde{u_m})+o_m(1).\]
This implies
\begin{equation}\label{PSlim3}
d \leq \frac{2c(2n-\alpha)}{n+2-\alpha}< S_A^{\frac{2n-\alpha}{n+2-\alpha}}.
\end{equation}
Let $D = \{x \in \mb R^n:\; g(x) \leq M\}$, where $M$ is defined in $(g2)$. Then using similar arguments as in Lemma \ref{comp_lem1}, we can show that
\begin{equation}\label{PSlim4}
\int_D |\widetilde{u_m}|^2~\mathrm{d}x \rightarrow 0 \; \text{as} \; m \rightarrow \infty.
\end{equation}
Then using \eqref{lambdd}, definition of $S_A$ and \eqref{PSlim4} we get
\begin{equation*}
\begin{split}
S_A B(\widetilde{u_m})^{\frac{1}{2^*_\alpha}} & \leq \int_{\mb R^n}|\nabla_A \widetilde{u_m}|^2~\mathrm{d}x\leq \int_{\mb R^n}|\nabla_A \widetilde{u_m}|^2~\mathrm{d}x + \int_{\mb R^n\setminus D} (\mu g(x)-\la)|\widetilde{u_m}|^2~\mathrm{d}x\\
& \leq \big((T_\mu-\la)\widetilde{u_m},\widetilde{u_m}\big) + \la \int_{D}|\widetilde{u_m}|^2~\mathrm{d}x =  \big((T_\mu-\la)\widetilde{u_m},\widetilde{u_m}\big) + o_m(1).
\end{split}
\end{equation*}
Passing on the limits we get
$d\geq S_A^{\frac{2n-\alpha}{n+2-\alpha}}$
which is a contradiction to \eqref{PSlim3}. Therefore, $d=0$ that is $u_m \rightarrow u$ strongly in $E$ as $ m \rightarrow \infty$.\hfill{\QED}
\end{proof}
%%%%%%%%%%%%%%%%%%%%%%%%%%%%%%%%%%%%%%%%%%%%%%%%%%%%%%%%%%%%%%%%%%%%%%%%%%%%%%%%%%%%%%%%%%%%%%%%%%%%%%%%%%%%%%%%%%%%%%%%%%%%%%%%%%%%

\section{Proof of main Theorems}
Before proving the main theorems, we prove some results that will help us to achieve our goal. The theorem below is similar to Theorem $1.1$ of \cite{ArSz}.
\begin{Theorem}\label{S_Aattain}
If $g\geq 0$ and $A \in L^n_{\text{loc}}(\mb R^n, \mb R^n)$, then the infimum $S_A$ is attained if and only if $\text{curl }A \equiv 0$.
\end{Theorem}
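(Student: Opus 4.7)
The plan is to prove that $S_A = S_{H,L}$ in all cases and that the infimum is realized precisely when $A$ is curl-free, with sufficiency coming from a gauge transformation and necessity from the equality case of the diamagnetic inequality.

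\textbf{Universal lower bound and equality case.} For every $u \in H^1_A(\mb R^n) \setminus \{0\}$ the identity $B(u) = B(|u|)$ combined with Theorem \ref{dia_eq} and the definition of $S_{H,L}$ gives
\begin{equation*}
\int_{\mb R^n} |\nabla_A u|^2 \,\mathrm{d}x \;\geq\; \int_{\mb R^n} \bigl|\nabla |u|\bigr|^2 \,\mathrm{d}x \;\geq\; S_{H,L}\, B(u)^{\frac{n-2}{2n-\alpha}},
\end{equation*}
so $S_A \geq S_{H,L}$. Moreover, writing $u = |u| e^{i\theta}$ on $\{|u|>0\}$, a direct expansion of $\nabla_A u = e^{i\theta}\bigl(-i\nabla|u| + |u|(\nabla\theta + A)\bigr)$ yields
\begin{equation*}
|\nabla_A u|^2 \;=\; \bigl|\nabla|u|\bigr|^2 + |u|^2\,|\nabla\theta + A|^2,
\end{equation*}
so equality in the diamagnetic inequality is equivalent to $A = -\nabla\theta$ almost everywhere on $\{|u|>0\}$.

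\textbf{Sufficiency.} Assuming $\mathrm{curl}\,A \equiv 0$, the simple-connectedness of $\mb R^n$ yields via the Poincar\'e lemma a real scalar $\phi$ with $A = \nabla\phi$. Let $U$ be one of the Talenti-type minimizers of $S_{H,L}$ recalled in the introduction and set $u := e^{-i\phi} U$. A direct calculation gives $\nabla_A u = -i e^{-i\phi}\nabla U$, hence $|\nabla_A u|^2 = |\nabla U|^2$ and $|u| = U$; consequently the ratio defining $S_A$ equals $S_{H,L}$ at $u$, and combined with the lower bound this shows $S_A = S_{H,L}$ is attained at $u$.

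\textbf{Necessity.} Conversely, if $u^* \in H^1_A\setminus\{0\}$ attains $S_A$, the chain of inequalities above collapses to a chain of equalities. This simultaneously forces $S_A = S_{H,L}$, makes $|u^*|$ a minimizer of $S_{H,L}$, and saturates the diamagnetic inequality at $u^*$. By Lieb's classification, $|u^*|$ is (up to translation and scaling) the positive Talenti bubble $U$, so $|u^*|>0$ everywhere. Writing $u^* = |u^*| e^{i\theta}$ and applying the equality case established above, we obtain $\nabla\theta + A = 0$ almost everywhere, so $A$ is the gradient of $-\theta$, and $\mathrm{curl}\,A = 0$ in the distributional sense.

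\textbf{Main obstacle.} The technical delicacy lies in the \emph{phase extraction} in the necessity step: one must make rigorous the representation $u^* = |u^*| e^{i\theta}$ with $\theta$ in a Sobolev class where $\nabla\theta \in L^2_{\mathrm{loc}}$, so that the pointwise identity for $|\nabla_A u^*|^2$ is justified. Because $|u^*|$ is a smooth positive bubble and $A \in L^n_{\mathrm{loc}}$, local phase lifts do exist in $H^1_{\mathrm{loc}}$, any ambiguity being a locally constant integer multiple of $2\pi$ that does not affect $\nabla\theta$; this suffices to conclude $\mathrm{curl}\,A \equiv 0$ distributionally.
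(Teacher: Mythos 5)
Your overall architecture (diamagnetic lower bound, gauge transformation for sufficiency, equality case of the diamagnetic inequality for necessity) is the same as the paper's, but the necessity step has a genuine gap. When you assert that the existence of a minimizer $u^*$ makes ``the chain of inequalities above collapse to a chain of equalities'' and thereby ``forces $S_A=S_{H,L}$'', you are implicitly invoking the reverse inequality $S_A\le S_{H,L}$, which you have only established under the hypothesis $\text{curl}\,A\equiv 0$ (via the gauge function $\phi$). In the necessity direction that hypothesis is exactly what is to be proved, so it cannot be assumed. From the existence of a minimizer alone, after normalizing $B(u^*)=1$, you only obtain $S_A=\int_{\mb R^n}|\nabla_A u^*|^2\,\mathrm{d}x\ \ge\ \int_{\mb R^n}\bigl|\nabla|u^*|\bigr|^2\,\mathrm{d}x\ \ge\ S_{H,L}$, which is the lower bound you already had; nothing forces equality at either step, so you cannot conclude that $|u^*|$ is a Talenti bubble, nor that the diamagnetic inequality is saturated, nor that $A=-\nabla\theta$.

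The missing ingredient is the unconditional identity $S_A=S_{H,L}$ for \emph{every} $A\in L^n_{\text{loc}}(\mb R^n,\mb R^n)$, curl-free or not. The paper establishes this first, by testing $S_A$ with the truncated concentrating bubbles $u_\epsilon=\psi U_\epsilon$ and observing that $\int_{\mb R^n}|Au_\epsilon|^2\,\mathrm{d}x=\langle |A|^2,|u_\epsilon|^2\rangle\rightarrow 0$ as $\epsilon\rightarrow 0$, since $|A|^2\in L^{n/2}_{\text{loc}}$ while $|u_\epsilon|^2$ is bounded in $L^{2^*/2}$ and tends to $0$ a.e.; together with the asymptotics for $|\nabla u_\epsilon|^2_2$ and $B(u_\epsilon)$ this gives $S_A\le S_{H,L}+\delta^\prime$ for any $\delta^\prime>0$. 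This is the only place the hypothesis $A\in L^n_{\text{loc}}$ is used quantitatively, and it is what makes the chain of equalities in the necessity step actually close. Once $S_A=S_{H,L}$ is in hand, your equality-case analysis and phase extraction proceed essentially as in the paper, which records the equality condition as $A=-\text{Im}(\nabla u/u)$ and concludes $\text{curl}\,A=0$; your sufficiency argument via $u=e^{-i\phi}U$ is the standard gauge-invariance argument that the paper delegates to Arioli--Szulkin.
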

\begin{proof}
At first, we prove that $S_A = S_{H,L}$. By Proposition \ref{H-L_ineq} and Theorem \ref{dia_eq}, {for $u \in H^1_A(\mb R^n)$} we have
\begin{equation*}
S_{H,L} \leq \frac{\int_{\mb R^n} |\nabla|u||^2~\mathrm{d}x}{B(u)^{\frac{n-2}{2n-\alpha}}} \leq \frac{\int_{\mb R^n}|\nabla_Au|^2~\mathrm{d}x}{B(u)^{\frac{n-2}{2n-\alpha}}}.
\end{equation*}
This implies $S_{H,L} \leq S_A$. Without loss of generality, we assume $0\in \Om$ and {for some $\delta>0$}, $B(0,\delta) \subset \Om \subset B(0,{2\delta})$($B(0,r)$ is an open ball of radius $r$ and center $0$ ). Let
$$U_\epsilon (x)= (n(n-2))^{\frac{n-2}{4}}\left(\frac{\epsilon}{\epsilon^2+|x|^2}\right)^{\frac{n-2}{4}}$$
and $u_\epsilon(x)= \psi(x)U_\epsilon(x)$, where $\psi \in C_c^{\infty}(\mb R^n,[0,1])$ such that $\psi \equiv 1$ in $B(0,\delta)$ and $\psi \equiv 0$ in $\mb R^n \setminus \Om$. We recall the following asymptotic estimates from section 3 of \cite{my}-
\begin{enumerate}
\item[(1)] $|\nabla u_\epsilon|^2_2 = C(n,\alpha)^{\frac{n(n-2)}{2(2n-\alpha)}}S_{H,L}^{\frac{n}{2}}+ O(\epsilon^{n-2})$.
\item[(2)]  $$ |u_\epsilon|_2^2 \geq
\left\{
	\begin{array}{ll}
	d \epsilon^2 |\ln \epsilon| + O(\epsilon^2) & \mbox{if } n=4 \\
		d\epsilon^2 + O(\epsilon^{n-2}) & \mbox{if } n\geq 5,
	\end{array}
\right.$$
where $d$ is  a positive constant.
\item[(3)] $B(u_\epsilon)^{(n-2)/(2n-\alpha)} \geq \left(C(n,\alpha)^{n/2}S_{H,L}^{(2n-\alpha)/2}- O(\epsilon^{{(2n-\alpha)/{2}}})\right)^{(n-2)/(2n-\alpha)}$.
    \item[(4)] $|u_\epsilon|_{2^*}^{2^*}= S^{n/2}+ O(\epsilon^n)$.
\end{enumerate}
{We have $u_\epsilon$ is bounded in $L^{2^*}(\mb R^n)$ and} $u_\epsilon(x) \rightarrow 0$ a.e. in $\mb R^n$ as $\epsilon \rightarrow 0$.
% So
%$$\int_{\mb R^n}g(x)|u_\epsilon|^2~\mathrm{d}x= \langle g,|u_\epsilon |^2\rangle \rightarrow 0 \;\text{as}\; \epsilon \rightarrow 0,$$
% where the duality product is taken with respect to $L^{n/2}(\mb R^n)$ and $L^{2^*/2}(\mb R^n)$.
Since $A \in L^{n}_{\text{loc}}(\mb R^n, \mb R^n)$  we have
$$\int_{\mb R^n}|A u_\epsilon|^2~\mathrm{d}x = \langle|A|^2, |u_\epsilon|^2 \rangle \rightarrow 0  \; \text{as}\; \epsilon \rightarrow 0 $$
where the duality product is taken with respect to $L^{n/2}(\mb R^n)$ and $L^{2^*/2}(\mb R^n)$.
Now, {let $\delta^\prime>0$ be given then choosing $\epsilon >0$ small enough and using $(1)$ and $(3)$ we get}
\begin{equation*}
\begin{split}
\frac{\int_{\mb R^n}|\nabla_A u_\epsilon|^2~\mathrm{d}x}{B(u_\epsilon)^{\frac{n-2}{2n-\alpha}}} &\leq \frac{\int_{\mb R^n}|\nabla u_\epsilon|^2+ |Au_\epsilon|^2 ~\mathrm{d}x}{B(u_\epsilon)^{\frac{n-2}{2n-\alpha}}}\\
& \leq \frac{C(n,\alpha)^{\frac{n(n-2)}{2(2n-\alpha)}}{S_{H,L}^{n/2}}+ O(\epsilon) }{\left(C(n,\alpha)^{\frac{n}{2}}S_{H,L}^{\frac{2n-\alpha}{2}}- O(\epsilon^{{\frac{2n - \alpha}{2}}})\right)^{\frac{n-2}{2n-\alpha}}}\\
& \leq S_{H,L}+\delta^\prime.
\end{split}
\end{equation*}
This implies $S_A \leq S_{H,L}$, therefore $S_A = S_{H,L}$. Let $u$ be minimizer of $S_A$ normalized by $B(u)=1$. Then \begin{equation}\label{Sattn1}
S_A = \int_{\mb R^n}|\nabla_Au|^2\geq \int_{\mb R^n}|\nabla|u||^2~\mathrm{d}x\geq S_{H,L}.
\end{equation}
Consequently, $|u(x)|= U_\epsilon(x-a)/B(U_\epsilon)$, for some $a \in \mb R^n$ because the minimizers of $S_{H,L}$ are of the form $U_\epsilon$ which are invariant under translation and dilation(Lemma 1.2, 1.3 of \cite{my}).  We can take $|u|>0$ and the equality in \eqref{Sattn1} occurs when the diamagnetic inequality in Theorem \ref{dia_eq} has an equality a.e. Therefore $\text{Im}((\nabla_Au)\overline{u}/|u|)=0$ that is $A = -\text{Im}(\nabla u/u)$. Since $\text{curl}(\nabla u/u)=0$, we are done. The condition is also sufficient, the proof follows similarly as in Theorem $1.1$ of \cite{ArSz}.\hfill{\QED}
\end{proof}

\noi The next step to prove our main theorem is introducing the Nehari manifold. Let
\[\mc N_{\la,\mu}= \left\{u \in E\setminus \{0\}:\; \langle I^\prime_{\la,\mu}(u),u\rangle=0\right\} = \left\{u \in E\setminus \{0\}:\; \big((T_\mu-\la)u,u\big)=B(u) \right\}.\]
Then the critical points of $I_{\la,\mu}$ lie in $\mc N_{\la,\mu}$. Let $X= \{v \in E: \; B(v)=1\}$ then using fibering map analysis, we say that for each $u \in E$, there exist
\[t_u = \left(\frac{((T_\mu-\la)(u),u)}{B(u)}\right)^{\frac{n-2}{2(n+2-\alpha)}}\]
such that $t_uu \in \mc N_{\la,\mu}$. Using Proposition $1.1$ of \cite{Sz}, we get $\mc N_{\la,\mu}$ is radially {diffeomorphic} to $X$ via the map
\[u \mapsto \left(\frac{((T_\mu-\la)(u),u)}{B(u)}\right)^{\frac{n-2}{2(n+2-\alpha)}}u. \]
On $\mc N_{\la,\mu}$,
\[I_{\la,\mu}(u) = \frac{n+2-\alpha}{2(2n-\alpha)}((T_\mu-\la)(u),u),\]
so we get
\[k_{\la,\mu}:= \inf_{u \in \mc N_{\la,\mu}} I_{\la,\mu}(u)= \frac{n+2-\alpha}{2(2n-\alpha)} \inf_{v \in X} ((T_\mu-\la)(v),v)^{\frac{2n-\alpha}{n+2-\alpha}}.\]
%%%%%%%%%%%%%%%%%%%%%%%%%%%%%%%%%%%%%%%%%%%%%%%%%%%%%%%%%%%%%%%%%%%%%%%%%%%%%%%%%%%%%%%%%%%%%%%%%%%%%%%%%%%%%%%%%%%%%%%%%%%%%%%%%%%%%%%%%%%%%%%%%%%%
%\begin{Proposition}
%Let $u \in \mc N_{\la,\mu}$ be a critical point of $I_{\la,\mu}$ such that $I_{\la,\mu}(u) < \frac{2(2^*_\alpha-1)}{2^*_\alpha}k_{\la,\mu}$. Then $u$ does not change sign that is $|u|$ is a solution of $(P_{\la,\mu})$.
%\end{Proposition}
%\begin{proof}
%Since $u$ is a critical point of $I_{\la,\mu}$, for every $w \in E$
%\begin{equation*}
%((T_\mu-\la)(u),w)= \text{Re} \left( \int_{\mb R^n} (|x|^{-\alpha}*|u|^{2^*_\alpha})|u|^{2^*_\alpha}u\overline{w}~\mathrm{d}x\right).
%\end{equation*}
%Let $w= u^{\pm}$ in above equation, where $u^{\pm}= \pm \max\{\pm u,0\}$. Suppose $u$ changes sign, so $u^+$ and $u^-$ are both nonzero and $u^\pm \in \mc N_{\la,\mu}$. Then $I_{\la,\mu}(u) \textcolor{red}{\geq} \frac{2^*_\alpha-1}{2^*_\alpha}( I_{\la,\mu}(u^+)+ I_{\la,\mu}(u^-)) \geq \frac{2(2^*_\alpha-1)}{2^*_\alpha}k_{\la,\mu}$ which is a contradiction. Thus, $u$ does not change sign and we have the conclusion.\hfill{\QED}
%\end{proof}
%%%%%%%%%%%%%%%%%%%%%%%%%%%%%%%%%%%%%%%%%%%%%%%%%%%%%%%%%%%%%%%%%%%%%%%%%%%%%%%%%%%%%%%%%%%%%%%%%%%%%%%%%%%%%%%%%%%%%%%%%%%%%%%%%%%%%%%%%%%%%%%%%%%

Now consider any domain $\mathcal{Q} \subset \mb R^n$. As we defined $I_{\la,\mu}$, in a similar manner we define
\begin{equation*}
\begin{split}
I_{\mu,\mc Q}(u)&= \frac12 \int_{\mc Q}(|\nabla_{A}u|^2+ \la|u|^2)~\mathrm{d}x - \frac{1}{22^*_\alpha}\int_{\mc Q}\int_{\mc Q}\frac{|u(x)|^{2^*_\alpha}|u(y)|^{2^*_\alpha}}{|x-y|^\alpha}~\mathrm{d}x\mathrm{d}y\\
&= \frac12 ((T_0-\la)(u),u)- \frac{1}{22^*_\alpha}B(u)
\end{split}
\end{equation*}
for $u \in H^{0,1}_A(\mc Q)$. This is an energy functional associated to the problem
$$ (P_\la)
\left\{
	\begin{array}{ll}
        (-i\nabla +A(x))^2u = \la u + (|x|^\alpha*|u|^{2^*_\alpha}){|u|^{2^*_\alpha-2}u}, \; u>0 & \mbox{in }  \mc Q \\
        u=0 & \mbox{on } \partial \mc Q.
	\end{array}
\right.$$
The Nehari manifold associated to $(P_\la)$ with $\mc Q= \Om$ is given by
\[\mc N_{\la,\mc Q}= \left\{u \in H^{0,1}_A(\mc Q)\setminus \{0\}:\; ((T_0-\la)(u),u)= B(u) \right\}\]
which is radially {diffeomorphic} to $X_{\mc Q}= \{v \in H^{0,1}_A(\mc Q):\; B(v)=1\}$. We set
\[k_{\mu,\mc Q}:= \inf_{u \in \mc N_{\la,\mc Q}} I_{\la,\mc Q}(u)= \frac{n+2-\alpha}{2(2n-\alpha)}\inf_{v \in X_{\mc Q}} ((T_0-\la)(u),u)^{\frac{2n-\alpha}{n+2-\alpha}}.\]
%%%%%%%%%%%%%%%%%%%%%%%%%%%%%%%%%%%%%%%%%%%%%%%%%%%%%%%%%%%%%%%%%%%%%%%%%%%%%%%%%%%%%%%%%%%%%%%%%%%%%%%%%%%%%%%%%%%%%%%%%%%%%%%%%%%%%%%%%%%%%%%%%%
\begin{Lemma}\label{inf_lev}
If $\la \in (0,\la_1(\Om))$ and $\mu \geq \mu(\la)$, then the following holds
\[\frac{n+2-\alpha}{2(2n-\alpha)}(\beta_\la S_A)^{\frac{2n-\alpha}{n+2-\alpha}} \leq k_{\la,\mu} \leq k_{\la,\Om} < \frac{n+2-\alpha}{2(2n-\alpha)} S_A^{\frac{2n-\alpha}{n+2-\alpha}}\]
{where $\beta_{\la}$ is defined as in Lemma \ref{comp_lem2}.}
\end{Lemma}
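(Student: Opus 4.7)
The plan splits the chain into three inequalities handled in the natural order of increasing difficulty: the lower bound, the middle inequality, and finally the strict upper bound, which is the substantive step.

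For the \textbf{lower bound} $k_{\la,\mu} \geq \frac{n+2-\alpha}{2(2n-\alpha)}(\beta_\la S_A)^{(2n-\alpha)/(n+2-\alpha)}$: starting from any $u \in \mc N_{\la,\mu}$, I would chain the Nehari identity, Lemma \ref{comp_lem2}, and the definition of $S_A$:
\[
B(u) = \big((T_\mu-\la)u,u\big) \geq \beta_\la \|u\|_\mu^2 \geq \beta_\la \int_{\mb R^n}|\nabla_A u|^2\,\mathrm{d}x \geq \beta_\la\, S_A\, B(u)^{(n-2)/(2n-\alpha)}.
\]
Rearranging yields $B(u)^{(n+2-\alpha)/(2n-\alpha)} \geq \beta_\la S_A$, and inserting this into the on-Nehari identity $I_{\la,\mu}(u)=\tfrac{n+2-\alpha}{2(2n-\alpha)}B(u)$ and taking the infimum closes this step. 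For the \textbf{middle inequality} $k_{\la,\mu} \leq k_{\la,\Om}$, I would use the embedding of $\mc N_{\la,\Om}$ into $\mc N_{\la,\mu}$ via extension by zero: any $u \in H^{0,1}_A(\Om)$ extended by $0$ to $\tilde u$ satisfies $g|\tilde u|^2 \equiv 0$ since $\operatorname{supp}\tilde u \subset \ov\Om = g^{-1}(0)$, so $\big((T_\mu-\la)\tilde u,\tilde u\big)=\big((T_0-\la)u,u\big)=B(u)=B(\tilde u)$; hence $\tilde u \in \mc N_{\la,\mu}$ with $I_{\la,\mu}(\tilde u)=I_{\la,\Om}(u)$, and infima respect the inclusion.

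The \textbf{strict upper bound} $k_{\la,\Om} < \frac{n+2-\alpha}{2(2n-\alpha)} S_A^{(2n-\alpha)/(n+2-\alpha)}$ is the main obstacle. After translating so that $0 \in \Om$, I would use a \emph{gauge-modified} cut-off Talenti bubble
\[
v_\epsilon(x) := e^{-i\,A(0)\cdot x}\,\psi(x)\,U_\epsilon(x) \in H^{0,1}_A(\Om),
\]
where $\psi U_\epsilon$ is the real test function from the proof of Theorem \ref{S_Aattain}. Since $\psi U_\epsilon$ is real, a direct computation gives $|\nabla_A v_\epsilon|^2 = |\nabla(\psi U_\epsilon)|^2 + |(A-A(0))\,\psi U_\epsilon|^2$ and $|v_\epsilon|=|\psi U_\epsilon|$, so $B(v_\epsilon)=B(\psi U_\epsilon)$ and $|v_\epsilon|_2^2=|\psi U_\epsilon|_2^2$. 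Projecting $v_\epsilon$ onto $\mc N_{\la,\Om}$ via the radial diffeomorphism,
\[
k_{\la,\Om} \leq \frac{n+2-\alpha}{2(2n-\alpha)}\left(\frac{\int_{\mb R^n}|\nabla_A v_\epsilon|^2\,\mathrm{d}x - \la\,|v_\epsilon|_2^2}{B(v_\epsilon)^{(n-2)/(2n-\alpha)}}\right)^{\!(2n-\alpha)/(n+2-\alpha)}\!,
\]
so it suffices to show the base ratio is strictly below $S_A=S_{H,L}$ for some small $\epsilon > 0$.

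The crucial technical point, and the main obstacle, is the control of $\int |(A-A(0))\psi U_\epsilon|^2$: without the gauge shift the corresponding $\int|A\,\psi U_\epsilon|^2$ is of the same order as $|u_\epsilon|_2^2$ and could overwhelm the negative term $-\la|u_\epsilon|_2^2$, destroying the strict inequality; the gauge replaces $A$ by $A-A(0)$, which vanishes at the concentration point. I would split the magnetic integral over $B(0,r) \cup (\Om\setminus B(0,r))$: continuity of $A$ at $0$ bounds the inner piece by $\eta(r)\,|u_\epsilon|_2^2$ with $\eta(r) \to 0$ as $r \to 0$, while the pointwise decay $|\psi U_\epsilon(x)|^2 \lesssim \epsilon^{n-2}|x|^{-2(n-2)}$ on $\Om\setminus B(0,r)$, combined with $A \in L^n_{\text{loc}}$ and H\"older, makes the outer piece $O(\epsilon^{n-2})$. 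Feeding this together with estimates (1)--(3) of Theorem \ref{S_Aattain} into the base ratio and first choosing $r$ small, the expression is at most $S_{H,L} - c\,\la\,|u_\epsilon|_2^2 / S_{H,L}^{(n-2)/2} + o(|u_\epsilon|_2^2)$. Since $|u_\epsilon|_2^2$ has size $\epsilon^2$ for $n \geq 5$ and $\epsilon^2|\ln\epsilon|$ for $n=4$, while the error terms $O(\epsilon^{n-2})$ and $O(\epsilon^{(2n-\alpha)/2})$ are of strictly higher order (one checks $(2n-\alpha)/2 > 2$ whenever $\alpha < 2n-4$, which holds for every $n \geq 4$ and $\alpha \in (0,n)$), the negative correction survives and gives the required strict inequality for $\epsilon$ small.
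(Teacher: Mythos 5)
Your proposal is correct and follows essentially the same route as the paper: the lower bound via Lemma \ref{comp_lem2} combined with the definition of $S_A$, the middle inequality via the inclusion of the $\Om$-constrained set (using $g\equiv 0$ on $\overline\Om$), and the strict upper bound via the gauge-modified bubble $e^{-iA(0)\cdot x}\psi U_\epsilon$, which is exactly the paper's $v_\epsilon=\psi U_\epsilon e^{i\gamma}$ with $\gamma(x)=-\sum A_j(0)x_j$. Your treatment of the magnetic correction term (splitting near and away from $0$, using continuity of $A$ at $0$ and $A\in L^n_{\mathrm{loc}}$) and the order comparison of the error terms against $|u_\epsilon|_2^2$ matches the paper's argument, if anything spelled out slightly more carefully.
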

\begin{proof}
Using Lemma \ref{comp_lem2}, we have $\beta_\la \|v\|_A^2 \leq \beta_\la \|v\|_\mu^2 \leq ((T_\mu-\la)(u),u)$. This implies, taking infimum over $X$, we get
\[\frac{n+2-\alpha}{2(2n-\alpha)}(\beta_\la S_A)^{\frac{2n-\alpha}{n+2-\alpha}} \leq k_{\la,\mu}.\]
This gives the first inequality. Now, for the second inequality, since \[X_\Om =\left\{u \in H_A^{0,1}(\Om):\;  \int_{\Om}\int_{\Om}\frac{|u(x)|^{2^*_\alpha}|u(y)|^{2^*_\alpha}}{|x-y|^{\alpha}}~\mathrm{d}x\mathrm{d}y=1\right\}\subset X,\] where $\Om=$ interior of $g^{-1}(0)$, we get $k_{\la,\mu} \leq k_{\la,\Om}$. We aim to show that
\[k_{\la,\Om} <  \frac{n+2-\alpha}{2(2n-\alpha)} S_A^{\frac{2n-\alpha}{n+2-\alpha}}.\]
Let $ U_\epsilon$ and $u_\epsilon$ be as defined in Lemma \ref{S_Aattain}. Define
\[J_\la (u)= \frac{\int_{\Om}(|\nabla_Au|^2-\la |u|^2)~\mathrm{d}x}{\left( \int_{\Om}\int_\Om\frac{|u(x)|^{2^*_\alpha}|u(y)|^{2^*_\alpha}}{|x-y|^\mu}~\mathrm{d}x\mathrm{d}y\right)^{\frac{n-2}{2n-\alpha}}}.\]
Let $A$ be continuous at $0$ and $\gamma(x):= -\sum A_j(0)x_j$. Then it is easy to check that $(A+\nabla \gamma)(0)=0$ and therefore by continuity of $A$ at $0$ we get a $\delta_1 >0$ such that
\[|(A+\nabla \gamma)(x)|^2\leq \tilde{k} <\la,\; \text{for all}\; |x|<\delta_1.\]
Also {let $\delta_2= \min\{\delta,\delta_1\}$} and define $v_\epsilon(x)= \psi(x)U_\epsilon(x)\exp(i\gamma(x)) $, where
$$\psi(x)=
\left\{
	\begin{array}{ll}
1 & \mbox{in }  B(0,{\delta_2/2}) \\
0  & \mbox{in } \mb R^n \setminus \Om.
	\end{array}
\right.$$
Then using $(1)$ of Lemma \ref{S_Aattain}, we get
\begin{equation*}
\begin{split}
\int_{\mb R^n}(|\nabla_A v_\epsilon|^2- \la |v_\epsilon|^2)~\mathrm{d}x &= \int_{\mb R^n}(|(-i\nabla+A)(\psi U_\epsilon \exp(i\gamma))|^2-\la \psi^2U_\epsilon^2 )~\mathrm{d}x\\
& \leq \int_{\mb R^n}(|\nabla(\psi U_\epsilon)|^2+\psi^2U_\epsilon^2|\nabla \gamma+A|^2-\la\psi^2U_\epsilon^2 )~\mathrm{d}x\\
& \leq C(n,\alpha)^{\frac{n(n-2)}{2(2n-\alpha)}}S_{H,L}^{\frac{n}{2}}+ O(\epsilon^{n-2})+({\tilde{k}-\la})\int_{B\left(0,\frac{\delta_2}{2}\right)}U_\epsilon^2~\mathrm{d}x.
\end{split}
\end{equation*}
Moreover, using $(3)$ of Lemma \ref{S_Aattain}, we get
\begin{equation*}
B(v_\epsilon )= B(u_\epsilon ) \geq C(n,\alpha)^{n/2}S_{H,L}^{(2n-\alpha)/2}- o(\epsilon^{{(2n - \alpha)/2}}).
\end{equation*}
It is a standard result that for $\epsilon >0$ small enough
$$\int_{B\left(0,\frac{\delta_2}{2}\right)}U_\epsilon^2 ~\mathrm{d}x \geq
\left\{
	\begin{array}{ll}
C\epsilon^2|\log \epsilon| & \mbox{if }  n=4 \\
C\epsilon^2 & \mbox{if } n\geq 5,
	\end{array}
\right.$$
where $C>0$ is a constant depending only on $n$.  Therefore since $\tilde{k}-\la <0$, when $n \geq 5$, for $\epsilon>0$ small enough we get
\begin{equation}\label{ngeq4}
J_\la(v_\epsilon) \leq \frac{C(n,\alpha)^{\frac{n(n-2)}{2(2n-\alpha)}}S_{H,L}^{\frac{n}{2}}+ O(\epsilon^{n-2})+({\tilde{k}-\la})C\epsilon^2 }{\left(C(n,\alpha)^{n/2}S_{H,L}^{(2n-\alpha)/2}- O(\epsilon^{{(2n - \alpha)/2}})\right)^{(n-2)/(2n-\alpha)}} < S_{H,L} = S_A.
\end{equation}
This implies
\[k_{\la,\Om}\leq \frac{n+2-\alpha}{2(2n-\alpha)} J_\la(v_\epsilon)^{\frac{2n-\alpha}{n+2-\alpha}} < \frac{n+2-\alpha}{2(2n-\alpha)} S_A^{\frac{2n-\alpha}{n+2-\alpha}}\]
that is the last inequality holds for $n\geq 5$. Similarly the result follows for $n=4$.\hfill{\QED}
\end{proof}

{\begin{Remark}
For the case $n=3$, \eqref{ngeq4} becomes
% it is easy to get that
%\[\int_{B\left(0,\frac{\delta_2}{2}\right)}U_\e^2 \geq C\e  \]
%for some constant $C>0$. So in this case,
\begin{equation}\label{ngeq41}
J_{\la}(v_\e) \leq S_{H,L} - \frac{((\la-\tilde k)-O(1))C\e}{\left(C(\alpha)^{3/2}S_{H,L}^{(6-\alpha)/2}- O(\epsilon^{{(6 - \alpha)/2}})\right)^{1/(6-\alpha)}}
\end{equation}
where the right hand side of \eqref{ngeq41} becomes less than $S_{H,L}$ if $\la>0$ is chosen large enough. But since $\la \in (0,\la_1)$ this can not be possible. So we remark that the question of existence and concentration of solutions of $(P_{\la,\mu})$ remains open in the case $n=3$.
\end{Remark}}

\noi \textbf{Proof of Theorem \ref{MT1}:} Let $\{u_m\}$ be a minimizing sequence for $I_{\la,\mu}$ on $\mc N_{\la,\mu}$. Then by Ekeland Variational Principle \cite{Eke}, $\{u_m\}$ becomes a Palais-Smale sequence. Using Proposition \ref{comp_lem4} and Lemma \ref{inf_lev}, we conclude that there exist a subsequence of $\{u_m\}$ that converges to least energy solution, say $u_\mu$ of $(P_{\la,\mu})$. \hfill{\QED}

\noi \textbf{Proof of Theorem \ref{MT2}:} Let $\{u_m\}$ be a sequence of solution for the problem $(P_{\la,\mu_m})$ such that $\la \in (0, \la_1(\Om))$, $\mu_m \rightarrow \infty$ and
\[I_{\la,\mu_m}(u_m)= \big((T_{\mu_m}-\la)(u_m),u_m\big) \rightarrow c < \frac{n+2-\alpha}{2(2n-\alpha)} S_{H,L}^{\frac{2n-\alpha}{n+2-\alpha}}.\]
By Lemma \ref{comp_lem2}, $((T_{\mu_m}-\la)u_m,u_m) \geq \beta_\la \|u_m\|_{\mu_m}^2$ {for sufficiently large $m$} which implies that $\beta_\la \|u_m\|_{\mu_m}^2 \leq c+ o(1)$. So, $\{u_m\}$ is bounded in $E$ and using Lemma \ref{comp_lem1}, we say that there exists $u\in H^{0,1}_A(\Om)$ such that, upto a subsequence, $u_m \rightharpoonup u $ weakly in $E$ and $u_m \rightarrow u$ strongly in $L^2(\mb R^n)$. Since $u_{\mu_m}$ solves $(P_{\la,\mu_m})$, we have
\begin{equation}\label{mainthr3}
\text{Re}\left( \int_{\mb R^n} \nabla_A u_m \overline{\nabla_A v}~\mathrm{d}x + \int_{\mb R^n}\left(\mu_m g(x)-\la )u_m\overline{v}- (|x|^{-\alpha}* |u_m|^{2^*_\alpha})|u_m|^{2^*_\alpha-2}u_m \overline{v}\right)~\mathrm{d}x \right)=0
\end{equation}
for every $v \in E$. Since $\Om = \{x \in \mb R^n:\; g(x)=0\}$, for any $v \in H^{0,1}_A(\Om)$, $\mu_m\int_{\mb R^n} g(x)u_m\overline{v}~\mathrm{d}x=0$. Letting $m \rightarrow \infty$ in \eqref{mainthr3}, we obtain
\[\text{Re}\left( \int_{\mb R^n} \nabla_A u \overline{\nabla_A v}~\mathrm{d}x -\la \int_{\mb R^n} u\overline{v}~\mathrm{d}x- \int_{\mb R^n} (|x|^{-\alpha}* |u|^{2^*_\alpha})|u|^{2^*_\alpha-2}u \overline{v}~\mathrm{d}x \right)=0, \]
{ for all $v \in H^{0,1}_A(\Om)$} which implies that $u$ is a weak solution of $(P_\la)$. Let $\tilde{u}_m= u_m -u$, then $\tilde{u}_m \rightharpoonup 0$ weakly in $E$ and $\tilde{u}_m \rightarrow 0$ strongly in $L^2(\mb R^n)$. Therefore,
\begin{equation}\label{mainthr4}
\big((T_{\mu_m}-\la){u}_m, {u}_m\big) = \big((T_{\mu_m}-\la)\tilde{u}_m, \tilde{u}_m\big) + \big((T_{0}-\la)u,u\big) + o(1).
\end{equation}
By Lemma $2.3$ of \cite{my}, we get
\[ B(u_m) -B(\tilde{u}_m) \rightarrow B(u) \; \text{as} \; m \rightarrow \infty.  \]
{Since $u$ is a weak solution of $(P_\la)$} it is easy to see that
\begin{equation}\label{mainthr1}
\big((T_{\mu_m}-\la)\tilde{u}_m, \tilde{u}_m\big) - B(\tilde{u}_m) = o(1).
\end{equation}
We claim that $B(\tilde{u}_m) \rightarrow 0$ as $m \rightarrow \infty$. Suppose not, that is $B(\tilde{u}_m) \rightarrow d >0$ as $m \rightarrow \infty$. Using \eqref{mainthr1} and arguments as in Lemma \ref{comp_lem4}, we get
\begin{equation*}
S_A B(\tilde{u}_m)^{\frac{n-2}{2n-\alpha}} \leq \int_{\mb R^n} |\nabla_A\tilde{u}_m|^2~\mathrm{d}x \leq \big((T_{\mu_m}-\la)\tilde{u}_m,\tilde{u}_m\big)+o(1)= B(\tilde{u}_m)+o(1).
\end{equation*}
Consequently, we get that $S_A \leq B(\tilde{u}_m)^{\frac{n+2-\alpha}{2n-\alpha}}+o(1)$ which implies
\begin{equation}\label{mainthr2}
S_A^{\frac{2n-\alpha}{n+2-\alpha}} \leq \lim_{m\rightarrow \infty}B(u_m).
\end{equation}
It is not hard to find that $\lim\limits_{m \rightarrow \infty} B(u_m)= \frac{2c(2n-\alpha)}{(n+2-\alpha)}$. Thus from \eqref{mainthr2} we get
\[S_A^{\frac{2n-\alpha}{n+2-\alpha}} \leq \lim\limits_{m\rightarrow \infty} B(u_m) < S_A^{\frac{2n-\alpha}{n+2-\alpha}}\]
which is a contradiction. Hence, $\lim\limits_{m\rightarrow \infty} B(\tilde{u}_m)= 0$ and $\big((T_{\mu_m}-\la)\tilde{u}_m, \tilde{u}_m\big) \rightarrow 0$ as $m \rightarrow \infty$. Using \eqref{mainthr4}, we get
\begin{equation}\label{mainthr5}
\big((T_{0}-\la)u, u\big)= \lim\limits_{m \rightarrow  \infty}\big((T_{\mu_m}-\la){u}_m,{u}_m\big).
 \end{equation}
{Since} $u \equiv 0$ in $\mb R^n \setminus \Om$, so $u_m = \tilde{u}_m$ in $\mb R^n \setminus \Om$. Also since $g\equiv 0$ in $\Om$, {for sufficiently large $m$ we get}
\[\int_{\mb R^n}g(x)u_m^2~\mathrm{d}x \leq \mu_m \int_{{\mb R^n \setminus\Om}}g(x)\tilde{u}_m^2~\mathrm{d}x \leq ((T_{\mu_m}-\la)(\tilde{u}_m), \tilde{u}_m)  + o(1). \]
Therefore, $\int_{\mb R^n}g(x) u_m^2~\mathrm{d}x=0$ and {since $u_m \to u$ strongly in $L^2(\mb R^n)$}, \eqref{mainthr5} implies
\[\lim_{m \rightarrow \infty}\int_{\mb R^n}|\nabla_A u_m|^2~\mathrm{d}x = \int_{\mb R^n}|\nabla_A u|^2~\mathrm{d}x \]
that is $u_m \rightarrow u$ {strongly in $E$ as $m \to \infty$}.\hfill{\QED}

\begin{Corollary}
If $\la \in (0,\la_1(\Om))$, then $\lim\limits_{\mu \rightarrow \infty} k_{\la,\mu} = k_{\la,\Om}$.
\end{Corollary}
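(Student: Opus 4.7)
The plan is to sandwich $k_{\la,\mu}$ between $k_{\la,\Om}$ from above and a $\liminf$ of at least $k_{\la,\Om}$ from below, combining the bounds in Lemma \ref{inf_lev} with the concentration result in Theorem \ref{MT2}.

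For the upper bound, Lemma \ref{inf_lev} already supplies $k_{\la,\mu} \leq k_{\la,\Om}$ for every $\mu \geq \mu(\la)$, so immediately $\limsup_{\mu \to \infty} k_{\la,\mu} \leq k_{\la,\Om}$. No further work is needed here.

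For the matching lower bound, I would take an arbitrary sequence $\mu_m \to \infty$ and, by Theorem \ref{MT1}, choose least energy solutions $u_m$ of $(P_{\la,\mu_m})$, so that $I_{\la,\mu_m}(u_m)=k_{\la,\mu_m}$. By Lemma \ref{inf_lev} the values $k_{\la,\mu_m}$ lie in the interval $\bigl[\tfrac{n+2-\alpha}{2(2n-\alpha)}(\beta_\la S_A)^{\frac{2n-\alpha}{n+2-\alpha}},\, k_{\la,\Om}\bigr]$, which is bounded and bounded strictly below $\tfrac{n+2-\alpha}{2(2n-\alpha)} S_A^{\frac{2n-\alpha}{n+2-\alpha}}$. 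Passing to a subsequence, $k_{\la,\mu_m}\to c$ with $c\leq k_{\la,\Om}$ and $c<\tfrac{n+2-\alpha}{2(2n-\alpha)} S_A^{\frac{2n-\alpha}{n+2-\alpha}}$. Theorem \ref{MT2} then applies, giving a solution $u$ of $(P_\la)$ with $u_m\to u$ strongly in $H^1_A(\mb R^n)$ (so in particular strongly in $L^{2^*}$, whence $B(u_m)\to B(u)$).

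The delicate step is to verify that the limit $u$ is nontrivial; this is the main obstacle. Here I would use the first inequality in Lemma \ref{inf_lev} together with the identity $I_{\la,\mu_m}(u_m)=\tfrac{n+2-\alpha}{2(2n-\alpha)}B(u_m)$ (valid on $\mc N_{\la,\mu_m}$). This forces $c\geq \tfrac{n+2-\alpha}{2(2n-\alpha)}(\beta_\la S_A)^{\frac{2n-\alpha}{n+2-\alpha}}>0$, and since $B(u_m)\to B(u)$, we get $B(u)>0$, so $u\not\equiv 0$. Consequently $u\in\mc N_{\la,\Om}$, giving $I_{\la,\Om}(u)\geq k_{\la,\Om}$.

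To close the argument, I would identify the limit of the energies with $I_{\la,\Om}(u)$: since $u_m\in\mc N_{\la,\mu_m}$,
\[
k_{\la,\mu_m}=I_{\la,\mu_m}(u_m)=\frac{n+2-\alpha}{2(2n-\alpha)}B(u_m)\longrightarrow \frac{n+2-\alpha}{2(2n-\alpha)}B(u)=I_{\la,\Om}(u),
\]
where the last equality uses that $u$ is supported in $\overline{\Om}$ and solves $(P_\la)$. Hence $c=I_{\la,\Om}(u)\geq k_{\la,\Om}$, which combined with $c\leq k_{\la,\Om}$ yields $c=k_{\la,\Om}$. Since the choice of $\mu_m\to\infty$ was arbitrary and every subsequential limit equals $k_{\la,\Om}$, we conclude $\lim_{\mu\to\infty}k_{\la,\mu}=k_{\la,\Om}$, as desired.
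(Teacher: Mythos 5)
Your proof is correct and follows essentially the same route as the paper: the upper bound $k_{\la,\mu}\leq k_{\la,\Om}$ from Lemma \ref{inf_lev}, least energy solutions from Theorem \ref{MT1}, and the concentration result of Theorem \ref{MT2} to show the subsequential limit of the energies is attained by $I_{\la,\Om}$ on $\mc N_{\la,\Om}$, hence is at least $k_{\la,\Om}$. You additionally spell out why the limit $u$ is nontrivial (via the strictly positive lower bound on $k_{\la,\mu}$ and continuity of $B$ under strong convergence), a point the paper's one-line proof leaves implicit.
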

\begin{proof}
By Lemma \ref{inf_lev}, $k_{\la,\mu} \rightarrow a \leq k_{\la,\Om}< \frac{n+2-\alpha}{2(2n-\alpha)}S_A^{\frac{2n-\alpha}{n+2-\alpha}} $ as $\mu \to \infty$. Theorem \ref{MT1} implies that $k_{\la,\mu}$ is achieved for $\mu \geq \mu(\la)$. Therefore, Theorem \ref{MT2} says $a$ must be achieved by $I_{\mu,\Om}$ on $\mc N_{\la,\Om}$. Hence $a \geq k_{\mu,\Om}$.\hfill{\QED}
\end{proof}

\section{{A Remark on nonlocal counterpart of $(P_{\la,\mu})$}}
In this section, we brief the nonlocal extension of the problem $(P_{\la,\mu})$ given by
\begin{equation*}
(P_{\la,\mu}^s)\left\{
\begin{split}
& (-\De)^s_A u + \mu g(x)u = \la u + (|x|^{-\alpha} * |u|^{2^*_{\alpha,s}})|u|^{2^*_{\alpha,s}-2}u  \;\text{in} \; \mb R^n ,\\
 & u \in H^s_A(\mb R^n, \mb C)
\end{split}
\right.
\end{equation*}
where $n \geq 4s$, $s \in (0,1)$ and $\alpha\in (0,n)$. Here $2^*_{\alpha,s}=\textstyle \frac{2n-\alpha}{n-2s}$ is the critical exponent in the sense of Hardy-Littlewood-Sobolev inequality. We assume the same conditions on $A$ and $g$ as before. For $u \in C_c^\infty(\Om)$, the fractional magnetic operator $(-\De)^s_A$, up to a normalization constant, is defined by
\[(-\De)^s_A u (x) = 2\lim_{\e \to 0^+} \int_{\mb R^n \setminus B_{\e}(x)} \frac{u(x)-e^{i(x-y)\cdot A\left(\frac{x+y}{2}\right)}u(y) }{|x-y|^{n+2s}}\mathrm{d}y \]
for all $x \in \mb R^n$. Up to correcting the operator by the factor $(1-s)$, it is true that $(-\De)^s_A$ converges to $-\De_A$ as $s \uparrow 1$. For further details we refer \cite{PM} and references therein. \\

\noi \textbf{Functional Setting-} Let $L^2_g(\mb R^n,\mb C)$ denote the Lebesgue space of complex valued functions with $\textstyle \int_{\mb R^n}g|u|^2 < +\infty$ endowed with the real scalar product
\[\langle u,v \rangle_{L^2_g}:= \text{Re}\left( \int_{\mb R^n} g(x)u\overline{v}~\mathrm{d}x \right), \; \text{for}\; u,v \in L^2_g(\mb R^n, \mb C).\]
We consider the magnetic Gagliardo semi-norm defined by
\[[u]_{s,A}^2 := \int_{\mb R^n}\int_{\mb R^n} \frac{\left|u(x)-e^{i(x-y)\cdot A\left(\frac{x+y}{2}\right)}u(y)\right|^2}{|x-y|^{n+2s}} \mathrm{d}x\mathrm{d}y.\]
 The scalar product is defined as
\begin{align*}
&\quad \langle u,v \rangle_{s,A} \\
&:= \langle u,v \rangle_{L^2_g}+ \text{Re}\left( \int_{\mb R^n}\int_{\mb R^n} \frac{\left(u(x)-e^{i(x-y)\cdot A\left(\frac{x+y}{2}\right)}u(y)\right)\overline{\left(v(x)-e^{i(x-y)\cdot A\left(\frac{x+y}{2}\right)}v(y)\right)}}{|x-y|^{n+2s}} \mathrm{d}x\mathrm{d}y\right)
\end{align*}
and the corresponding norm is given by
\[\|u\|_{s,A}= \left(\|u\|_{L^2_g}^2 + [u]_{s,A}^2 \right)^{\frac12}.\]
We define $H^s_{A,g}(\mb R^n,\mb C)$ as the closure of $C_c^\infty(\mb R^n, \mb C)$ with respect to the norm $\|\cdot\|_{s,A}$. Then we have the following properties regarding the function space $H^s_{A,g}(\mb R^n,\mb C)$ -
\begin{enumerate}
\item[(i)] $(H^s_{A,g}(\mb R^n,\mb C), \langle \cdot,\cdot\rangle_{s,A})$ is a real Hilbert space.
\item[(ii)] The embedding
$H^s_{A,g}(\mb R^n,\mb C) \hookrightarrow L^p(\mb R^n,\mb C)$
is continuous for all $p \in [2,2^*_s]$ where $2^*_s = \textstyle \frac{2n}{n-2s}$. Furthermore, for any $K \Subset \mb R^n$ and $p \in [1,2^*_s)$, the embedding $ H^s_{A,g}(\mb R^n,\mb C) \hookrightarrow L^p(K,\mb C)$ is compact.
\item[(iii)] (\textit{Diamagnetic inequality}) For each $u \in H^s_{A,g}(\mb R^n,\mb C)$
\[|u|\in H^s_g(\mb R^n, \mb C) \; \text{and} \;\||u|\|_s\leq \|u\|_{s,A}\]
where $H^s_g(\mb R^n,\mb C) = H^s_{A,g}(\mb R^n,\mb C)$ with $A \equiv 0$.
\end{enumerate}
For further details related to this topic, we refer \cite{PM,MPS} and the references therein.
\begin{Definition}
We say that $u \in H^s_{A,g}(\mb R^n, \mb C)$ is a weak solution of $(P_{\la,\mu}^s)$ if
\begin{align*}
& \text{Re}\left( \int_{\mb R^n}\int_{\mb R^n} \frac{\left(u(x)-e^{i(x-y)\cdot A\left(\frac{x+y}{2}\right)}u(y)\right)\overline{\left(v(x)-e^{i(x-y)\cdot A\left(\frac{x+y}{2}\right)}v(y)\right)}}{|x-y|^{n+2s}} \mathrm{d}x\mathrm{d}y + \mu \int_{\mb R^n} g(x)u\overline v~\mathrm{d}x\right.\\
&\quad \quad \left.- \la \int_{\mb R^n}u\overline{v}~\mathrm{d}x - \int_{\mb R^n}(|x|^{-\mu}\ast |u|^{2^*_{\alpha,s}})|u|^{2^*_{\alpha,s}-2}u\overline{v}~\mathrm{d}x\right) =0
\end{align*}
for all $v \in H^s_{A,g}(\mb R^n, \mb C)$.
\end{Definition}
The functional $I_s: H^s_{A,g}(\mb R^n, \mb C) \to \mb R $, associated to $(P_{\la,\mu}^s)$, is defined by
\[ I_s(u) = \frac{\|u\|_{s,A}^2}{2} -\frac{\la}{2}\int_{\mb R^n}|u|^2~\mathrm{d}x  - \frac{1}{22^*_{\alpha,s}}\int_{\mb R^n}(|x|^{-\mu}\ast |u|^{2^*_{\alpha,s}})|u|^{2^*_{\alpha,s}}~\mathrm{d}x.\]
Then $I_s \in C^1(H^s_{A,g}(\mb R^n, \mb C), \mb R)$ and the critical points of $I_s$ are exactly the weak solutions of $(P_{\la,\mu}^s)$. Based on this setting, we expect that Theorem \ref{MT1} and \ref{MT2} type of results {can as well as} obtained for the problem $(P_{\la,\mu}^s)$ employing the same {arguments as in this article}. Lastly, we cite \cite{MRZ,FPV} for readers as very recent articles concerning this topic.

\section*{Acknowledgments} We thank the anonymous referee {for advising} us to mention the possibility of investigating our results
in the framework of nonlocal magnetic operator which we have addressed in section $5$ and bringing into our notice references \cite{PM,MPS}.

 \linespread{0.5}

\end{document}